\begin{document} 

\title{Numerical methods for multiscale inverse problems
\thanks{{Received date / Revised version date}
}}

\author{Christina Frederick
\thanks {School of Mathematics, Georgia Institute of Technology, Atlanta, GA, 30332, USA (cfrederick6@math.gatech.edu).}
\and Bj\"orn Engquist \thanks {Institute for
Engineering and Scientific Computing (ICES) and Department of Mathematics at The University of Texas at Austin,  Austin, TX 78712, USA (engquist@ices.utexas.edu).}}

\maketitle
\begin{keywords}Inverse problems, stability, heterogeneous multiscale method, periodic homogenization

\smallskip

{\bf AMS subject classifications.}	65N21, 35R25, 65N30, 35B27 	
\end{keywords}

\begin{abstract}
We consider the inverse problem of determining the highly oscillatory coefficient $\aeps$ in partial differential equations of the form $-\nabla \cdot \left(\aeps\nabla \ueps \right)+b\ueps = f$ from given measurements of the solutions. Here, $\epsilon$ indicates the smallest characteristic wavelength in the problem ($0<\epsilon\ll1$). In addition to the general difficulty of finding an inverse is the challenge of multiscale modeling, which is hard even for forward computations. The inverse problem in its full generality is typically ill-posed, and one common approach is to reduce the dimension by seeking effective parameters.  We will here include microscale features directly in the inverse problem and avoid ill-posedness by assuming that the microscale can be accurately represented by a low-dimensional parametrization. The basis for our inversion will be a coupling of the parametrization to analytic homogenization or a coupling to efficient multiscale numerical methods when analytic homogenization is not available. We will analyze the reduced problem, $b = 0$, by proving uniqueness of the inverse in certain problem classes and by numerical examples and also include numerical model examples for medical imaging, $b > 0$, and exploration seismology, $b < 0$.
\end{abstract}
\section{Introduction}

Multiscale modeling plays a crucial role in the development of mathematical and numerical methods for solving inverse problems arising in science and engineering. The design of accurate models must account for the numerous challenges involved in capturing a wide spectrum of time and spatial scales. Full resolution forward models come at a high computational cost, and many model-reduction techniques create difficulties in the mathematical formulation of the inverse problem. It is important to understand microstructure inversion problems where these challenges can be addressed by employing multiscale forward solvers and including prior knowledge in the inversion process. 

We will consider the problem of determining an unknown parameter in a forward model $G:X\rightarrow P$ from observational data. Here, $X$ and $P$ are function spaces, and the map $G$ is a solution operator for a partial differential equation of the form $\fwd(\aeps)=\ueps\in P$. The unknown parameter $\aeps\in X$ is a coefficient in the equation. The multiscale nature of the problem is indicated by the superscript $\epsilon$, where $\epsilon$ is the ratio of scales in the model ($0<\epsilon\ll1$). 

The collected measurements, denoted by $\data\in \RR^{n}$, are in practice obtained from experiments or electrical techniques. The mapping $\obs: X\times P\rightarrow \RR^{n}$ from the unknown parameter to the data, called the { observation operator}, is derived from the forward model. The solution to the inverse problem is then obtained by matching observations $\data = \obs(\aeps, \ueps)$, with predictions of the form $z=\obs(\widehat{a}, \widehat{u})$. In practice, the mapping from the parameter space to the space of predictions may differ from the observation operator, however here it is assumed that the mappings are the same.

Data-driven optimization problems require many simulations of the forward model, and when faced with balancing computational cost with accuracy, most approaches only deal with scientific models of large scale behavior and, for example in \cite{Nolen}, account for microscopic processes by using effective or homogenized equations to simplify computations. Homogenization theory \cite{Bensoussan, Jikov2011} provides the form of a reduced model that describes the effective behavior of the family of solutions $\{\ueps\}_{\epsilon>0}$; under certain ellipticity conditions, it is known that $\ueps\rightharpoonup \ueff$, as $\epsilon\to 0$, where $\ueff$ is the solution to an equation of the form $G(\aeff)=\ueff$, and the expression for the homogenized coefficient $\aeff$ is given by the theory.  

Ideas from homogenization theory can be used to account for the mismatch in scales between an effective model and the data generated by the full model, as demonstrated in \cite{Nolen}. It is shown that if only effective parameters in the forward model are desired, inversion can be performed using a macroscopic model for predictions.  In particular, the result is shown for cases where unknown coefficient $A$ is dependent on a single parameter $\theta\in \RR$ through a linear mapping $\theta\rightarrow \aeff(\theta)$. The inverse problem is formulated as a minimization problem,
\begin{align}
\underset{\theta\in \RR, \aeff=\aeff(\theta)}{\text{minimize     }}\quad \|\obs(\aeff, G(\aeff))-y^{\epsilon}\|.
\tag{$\text{IP}^{0}$}\label{effinv} \end{align} 
\noindent The the reduced formulation (\ref{effinv}) is often well-posed and results in a lower sensitivity to noise. A drawback of this approach is the loss of details about microscale features. 

In the current approach, full inversion is performed using effective forward models that are based on ideas from homogenization theory, as in \cite{Nolen}. We make use of the prior assumption of a {\it microscale} parametrization $m \rightarrow \aeps(m)$, where the  parameter $m$ is a scalar function depending on a low dimensional vector $ \theta\in\RR^{N}$, 
\begin{align}
m(x) = \sum_{i=1}^{N} \theta_{i} \psi_{i}(x), \labeleq{mpw}
\end{align}
where the functions $\psi_{i}$ are smooth functions defined on the interval $[\frac{i-1}{N}, \frac{i}{N})$, $1\leq i \leq N$. Macroscopic predictions are made using ideas from homogenization theory, gaining the benefits of the previous approach, and the corresponding minimization problem is
\begin{align}
\underset{m, \aeps=\aeps(m)}{\text{minimize }}\quad \|\obs(\aeps, G(A))-y^{\epsilon}\|.\tag{$\text{IP}$}\label{microinv} \end{align} 
We will give sufficient conditions for uniqueness and boundary stability of solutions to a continuous inverse problem for elliptic partial differential equations that is related to (\ref{microinv}). These conditions correspond to a classification of certain physical features of the microstructure that are preserved under homogenization.

The following is a list of main strategies for solving inverse problems involving multiscale model parameters. 

\begin{enumerate}[I.]
\item {\bf Full coefficient inversion.} {Full coefficient inversion is performed by minimizing the distance between model predictions and the given data,
\begin{align}
\underset{\aeps}{\text{   minimize }}\quad \|\mathscr{G}(\aeps, G(\aeps))-y^{\epsilon}\|.\tag{$\text{IP}^{\epsilon}$}\label{fullinv} 
\end{align} Determining the original coefficient using high resolution predictions comes at a large computational cost and is often ill-posed due to the presence of multiple local minima in the associated cost functionals. Therefore, we omit this case from our computations.}

\item \textbf{Indirect microscale parameter estimation.}\label{TWOSTAGEproblem} {An indirect method for solving (\ref{microinv}) involves a two-stage procedure. The first step is to solve the problem of estimating the parameter $\hat{\aeff}$ in the effective model that best matches the given data. The second step involves determining the microscale parameter $m$ such that the homogenized coefficient corresponding to $\aeps(m)$ is $\hat{A}$.} This method can be written as

\begin{align*}
1.  &\underset{\aeff}{\text{ minimize}}\quad\|\obs(\aeff, \fwd(\aeff))-y^{\epsilon}\| \rightarrow \hat{A}\\
2.  & \underset{m}{\text{ minimize}}\quad\|\aeff(m) - \hat{\aeff} \|.
\end{align*}

\item {\bf Direct microscale parameter estimation.}\\
In this case, (\ref{microinv}) is solved in one step, where predictions of the forward model are made using techniques from multiscale modeling and numerical homogenization. In our experiments we consider two methods.

\begin{enumerate}[a.]
\item  {\it Known homogenization}.  \label{EFFproblem.} If the explicit form of the homogenized coefficient $\aeff(m)$ corresponding to each parameterized coefficient $\aeps(m)$ is known, a macroscopic method can be used to solve the effective equation.

\begin{equation}
 \begin{aligned}
\underset{m\in X, \aeff=\aeff(m)}{\text{minimize }}\quad \|\obs(\aeff, \fwd(\aeff))-y^{\epsilon}\|.
  \end{aligned}
  \labeleq{effinv}
\end{equation}

\item \textit{HMM}.   Often, the explicit form of the homogenized coefficient is not available, preventing the direct computation of macroscopic predictions. This issue can be overcome numerically with the heterogeneous multiscale method, or HMM, introduced by E and Engquist \cite{E2003}. HMM provides a framework for the design of methods that capture macroscale properties of a system using microscale information. The inverse problem is formulated as

\begin{equation}
 \begin{aligned}
\underset{m, \aeps=\aeps(m)}{\text{minimize }}&\quad \|\obs(\aeps, \fwd(\aeff))-y^{\epsilon}\|.
  \end{aligned}
  \labeleq{hmminv}
\end{equation}
Here, the forward model $\fwd(\aeff)$ is approximated using methods for numerical homogenization of the predicted coefficient $\aeps$. In the experiments we use the heterogeneous multiscale method (HMM).

\end{enumerate}
\end{enumerate}

In \S\ref{sec:forwardmodels} we give a brief background on periodic homogenization and introduce key microstructure models that demonstrate the main ideas of this work. In \S \ref{sec:msinv}, a multiscale inverse problem related to (\ref{microinv}) is formulated in the classical setting of inverse problems for elliptic equations. Uniqueness and boundary stability results are given.  In \S \ref{sec:hmm} we describe the implementation of the finite element heterogeneous multiscale method. Numerical results for parameter inversion are provided in \S \ref{sec:numericalresults}. In  \S \ref{sec:qpat} and \S \ref{sec:helm} we consider model problems from applications in medical imaging and geophysics. Then we conclude in \S \ref{sec:conclusion}.


\subsection{Notation}

The averaging operator is denoted by $\<f \>_{X} = \frac{1}{|X|}\int_{X}f(y)dy$, where $|X|$ is the volume of the set $X\subset \RR^{d}$.
For most examples $Y=[0,1]^d$, and unless otherwise stated,  $\<\cdot\>=\<\cdot\>_Y$.
For any domain $D$, we use the Sobolev space $W^{m,p}(D)$ with Sobolev norm $\|\cdot\|_{W^{m,p}}$. If $D=\Omega$, we omit $D$. Moreover, if $D=\Omega$ and $p=2$, we denote by $H^{m}(\Omega)$ the Sobolev space $W^{m,2}(\Omega)$, the usual $L^{2}$ inner product by $(\cdot, \cdot)$ and the Sobolev norm by $\|\cdot\|_{m}$. The norm on the Banach space of bounded linear operators between $H^{1/2}(\partial\Omega)$ and $H^{-1/2}(\partial\Omega)$ is denoted by $\|\cdot\|_{*}$.

\section{Homogenized forward model}\label{sec:forwardmodels}
Let $\Omega\subset\RR^{d}$ be a bounded domain with $C^{2}$-boundary. We consider equations for which there is a well established homogenization theory \cite{Bensoussan, Jikov2011},
\begin{align} 
-\nabla\cdot(\aeps \nabla \ueps) +b\ueps = f & \text{ in }  \Omega, \label{eq:ms} \end{align}
where $f$ and $b$ are given bounded functions and $\aeps(x)=a(x,x/\epsilon)$ for a given matrix function $a$ that is locally periodic, symmetric, and uniformly positive definite.  

A constant positive definite matrix $\aeff$ is said to be the homogenized matrix for $\a$, if for any bounded domain $\Omega \subset \RR^{d}$ and any $f\in H^{-1}(\Omega)$ the solutions of the Dirichlet problem (\ref{eq:ms}) possess the following property of convergence: as $\epsilon\to 0$, $\ueps {\rightharpoonup}{\ueff} $  in $H^{1}_{0}(\Omega)$ and ${\aeps}\nabla \ueps {\rightharpoonup} \aeff\nabla \ueff$ in $L_{2}(\Omega)$, where $\ueff$ is the solution of the Dirichlet problem
\begin{align} 
-\div(\aeff(x) \nabla\ueff(x))+b\ueff  = f & \text{ in } \Omega.  \label{eq:eff}\end{align}
The homogenized matrix has a closed form expression,
\begin{align}
A(x) = \frac{1}{|Y|}\int_{Y}({a}(x,y) Id+{a}(x,y) \nabla_{y}\chi) dy,\labeleq{effcoeff}
\end{align}
\noindent where $\chi = (\chi_{1}, \chi_{2})$ solves the cell problems,
\begin{align}
-\nabla_{y} \cdot ({a}(x,y)  \nabla_{y}\chi )=\nabla_{y}\cdot {a}(x,y) Id,\labeleq{cellb}
\end{align}
with the constraint $\chi(x,y)$ is $Y-$periodic in $y$ and $\langle \chi(x,\cdot) \rangle = 0$. 
In general, $\refeq{effcoeff}$ must be calculated using solutions to cell problems. Explicit formulas are known in one dimension and also in certain higher dimensional models, such as those describing layered media \cite{Jikov2011}. Even if the original coefficients are isotropic, the process of homogenization introduces anisotropy.

\subsection{Microstructure models}

The ideas in the remaining sections can be understood in terms of the following examples of parametrized microstructures. Let $m\in L^{\infty}(\Omega)$ be a function taking values in the interval $I_{\lambda}=[\lambda^{-1},\lambda]$ for $\lambda>1$, and let
\begin{align}
\aeps(m(x), x) = a(m(x), x/\epsilon) \text{Id}, \labeleq{aeps-m}
\end{align} 
where $a(x,y)$ is smooth, bounded, and periodic in the second variable and Id denotes the $d\times d$ identity matrix.
The first three models below are commonly used in the analysis of layered materials (see Figure \ref{fig:microlayered}). The last two models represent properties of materials containing cell microstructures (see Figure \ref{fig:micro2}).
\begin{enumerate}[A.]
\item {\bf Amplitude.} \label{AMP} For a positive constant $a_0$ and a periodic, bounded function $\bar{a}(y) = \bar{a}(y_{2})$ with $\<\bar{a}\>=0$, the parametrization of the amplitude of oscillations is modeled by
\begin{align}
a(m, y)=a_0+m \bar{a}(y_{2}).& \label{eq:amplitude} 
\end{align}
\item {\bf Volume fraction.}\label{VF}  A special case of layered materials are two-phase laminates, where the parameter $m$ determines the volume fraction of each, 
\begin{align}
a(m, y)=\begin{cases} k_{1} & 0\leq  y_{2} < m\\
 k_{2}& m\leq y_{2} <1.\end{cases}\label{eq:af}
\end{align}

\item {\bf Angle.} \label{ANG} Here, $\hat{a}$ is a periodic function and $\sigma_{m}$ is a matrix of rotation,
\begin{align}
a(m,y)=\hat{a}(\sigma_{m} y), \qquad \sigma_{m} =\begin{pmatrix}\cos (2\pi m) &\sin (2\pi m) \\ -\sin (2\pi m)&\cos (2\pi m)\end{pmatrix} .  
 \label{eq:angle} 
\end{align}

\end{enumerate}

\begin{figure}
\includegraphics[width=.3\linewidth]{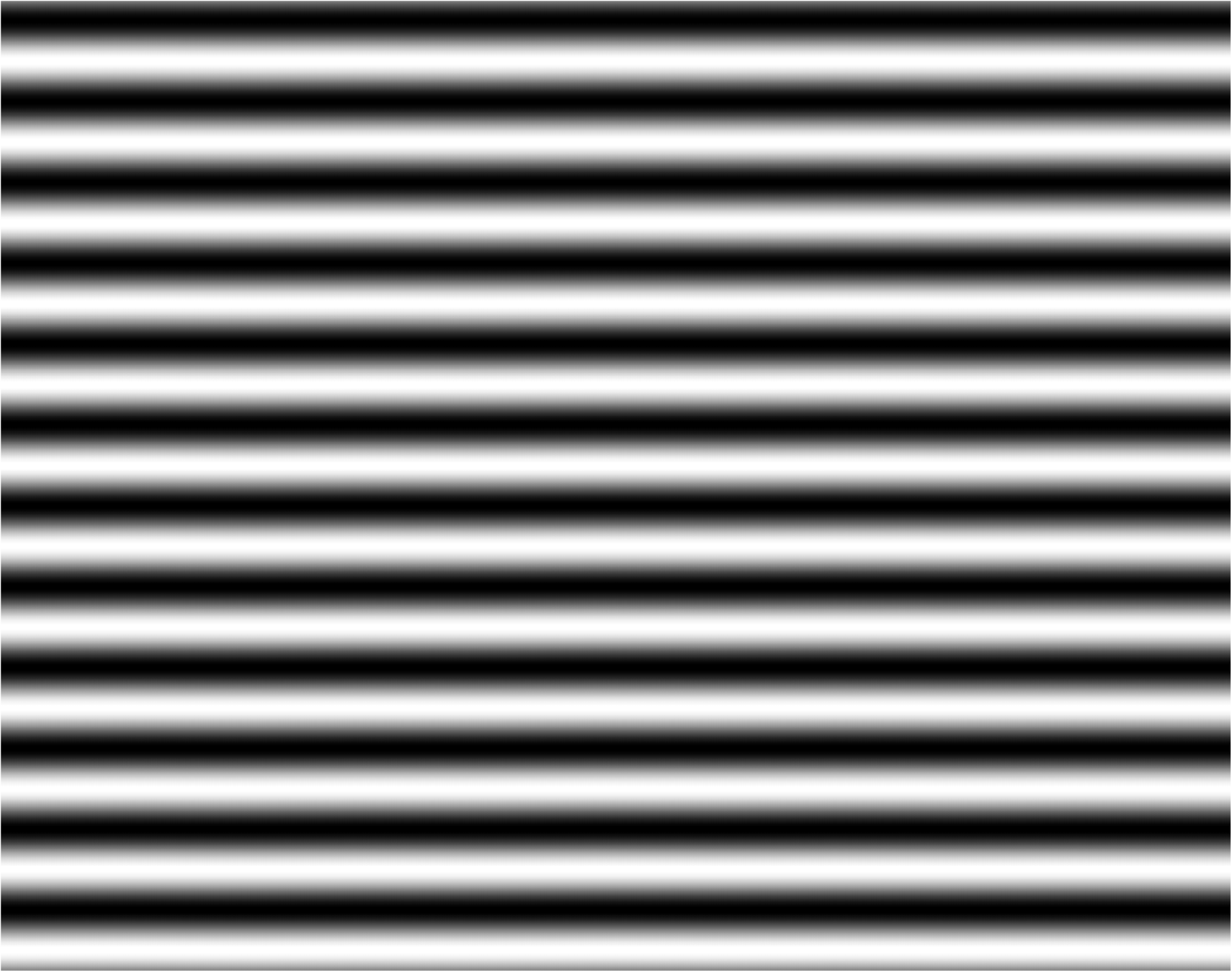}\hfill{\includegraphics[width=.3\linewidth]{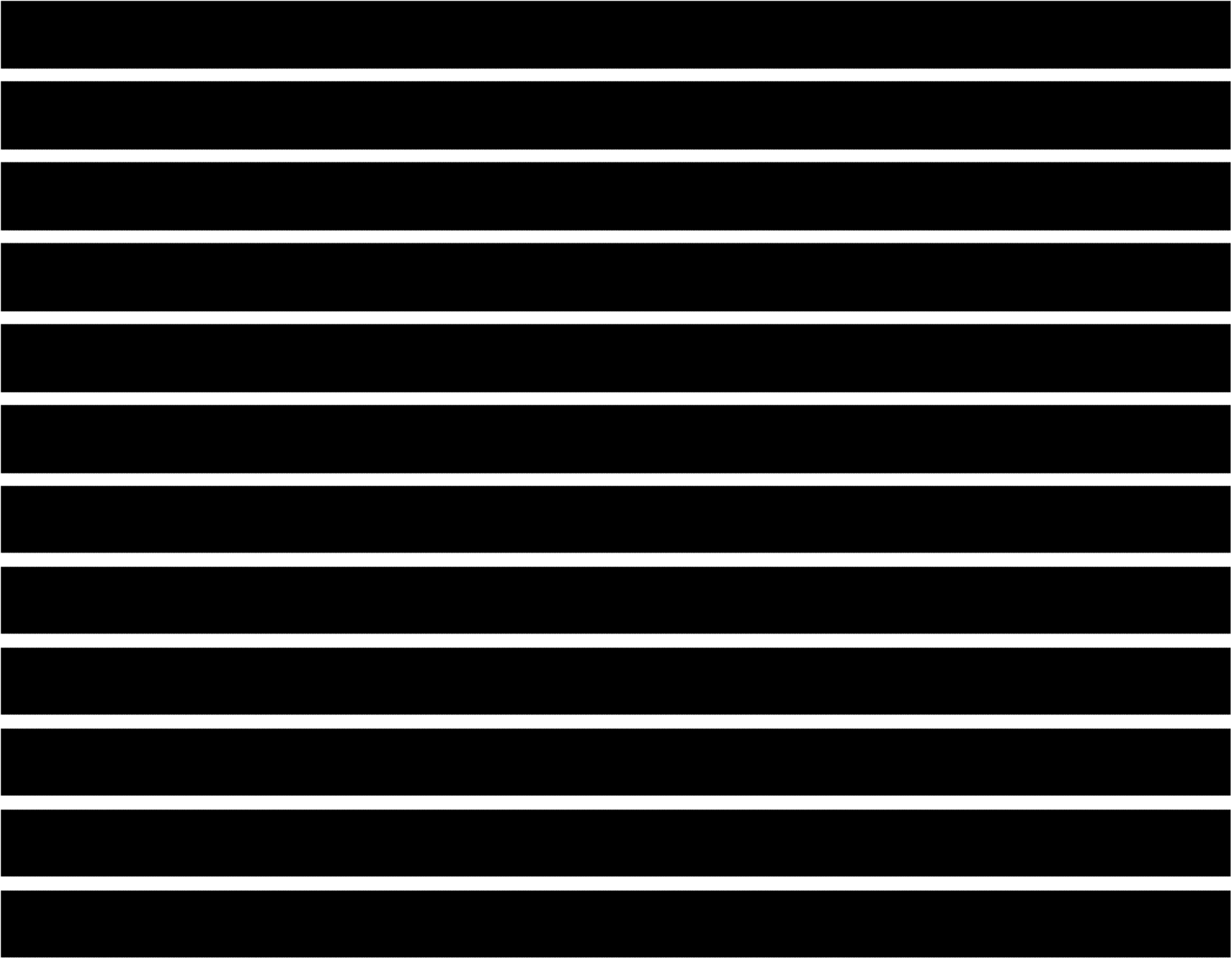}}\hfill
{\includegraphics[width=.3\linewidth]{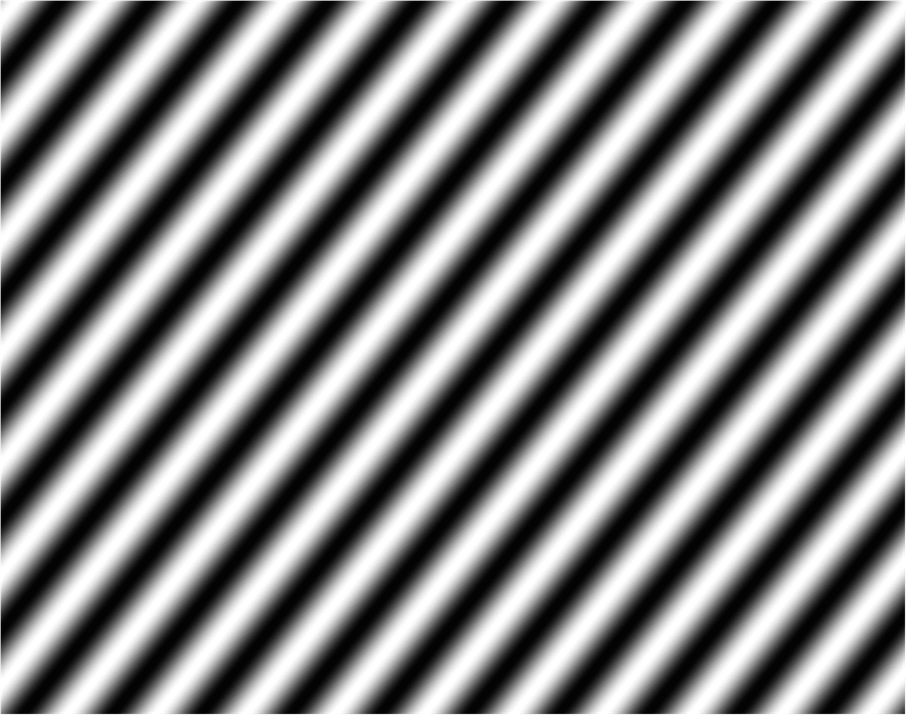}}
\caption{{\bf Layered microstructures.} From left to right: A - Amplitude, B -  Volume fraction, C - Angle.}
  \label{fig:microlayered}
\end{figure}

\newcommand{\boxY}{Y}
\begin{enumerate}[A.]\setcounter{enumi}{3}
\item\textbf{Amplitude in cell structures.}\label{AMP2d}  The analog of Model \ref{AMP} is a class of separable functions $a$,
\begin{align}
a(m, y)= {a}_1(m,y_1){a}_2(m,y_2),\label{eq:amp2d}
\end{align}
where $ a_1$ and $ a_2$ of the type in \refeq{amplitude}.

\item \textbf{Volume fraction in cell structures.}\label{VF2D} The analog of Model \ref{VF} is
\begin{align}
a(m, y)= \begin{cases} k_{1}  & y\in {m} \boxY\\
 k_{2}& \text{otherwise}, \end{cases}\label{eq:vf2d}
\end{align}
where $k_{1}$ and $k_{2}$ are positive constants.

\end{enumerate}

\begin{figure}
{\includegraphics[width=.45\linewidth]{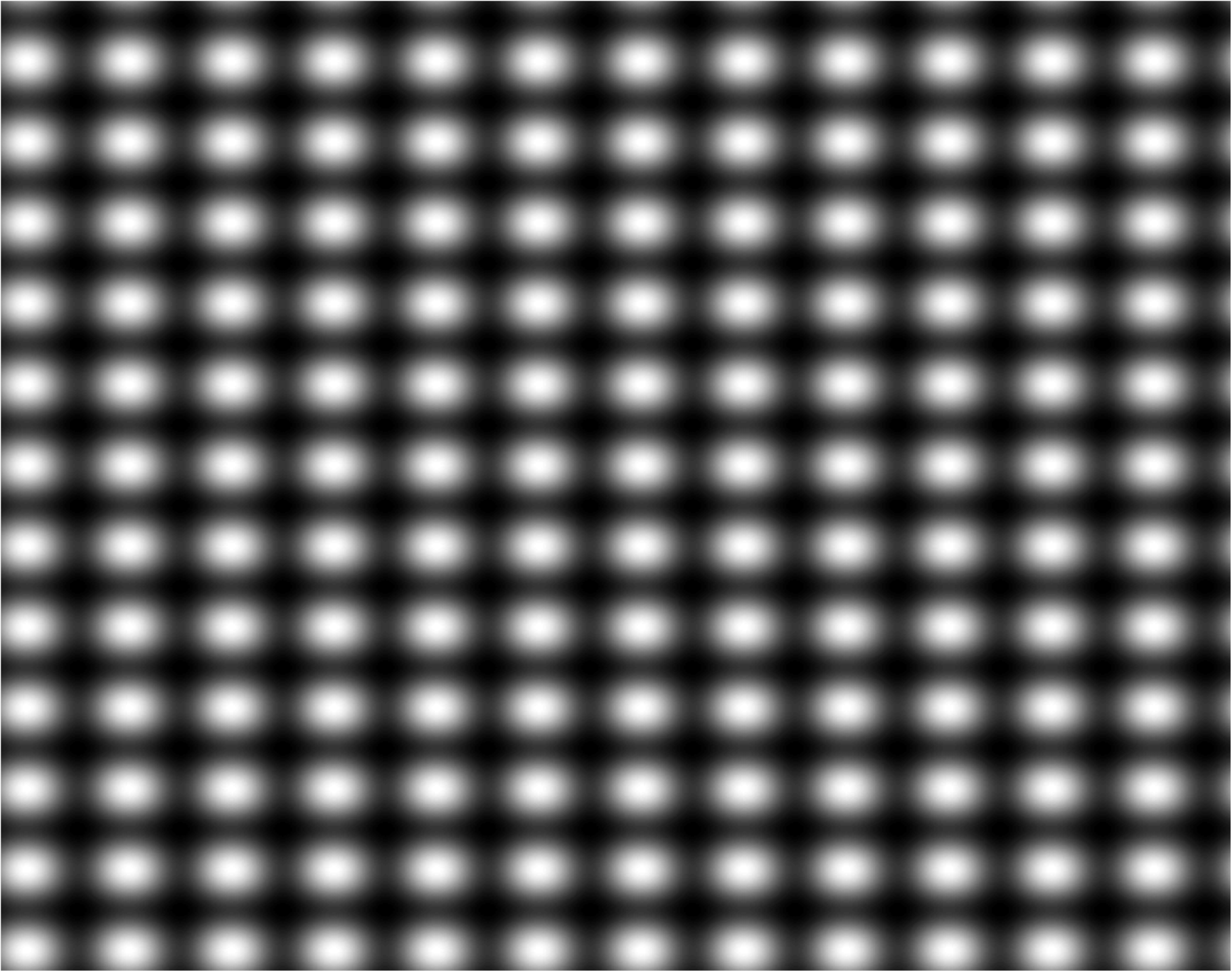}}\hfill
{\includegraphics[width=.45\linewidth]{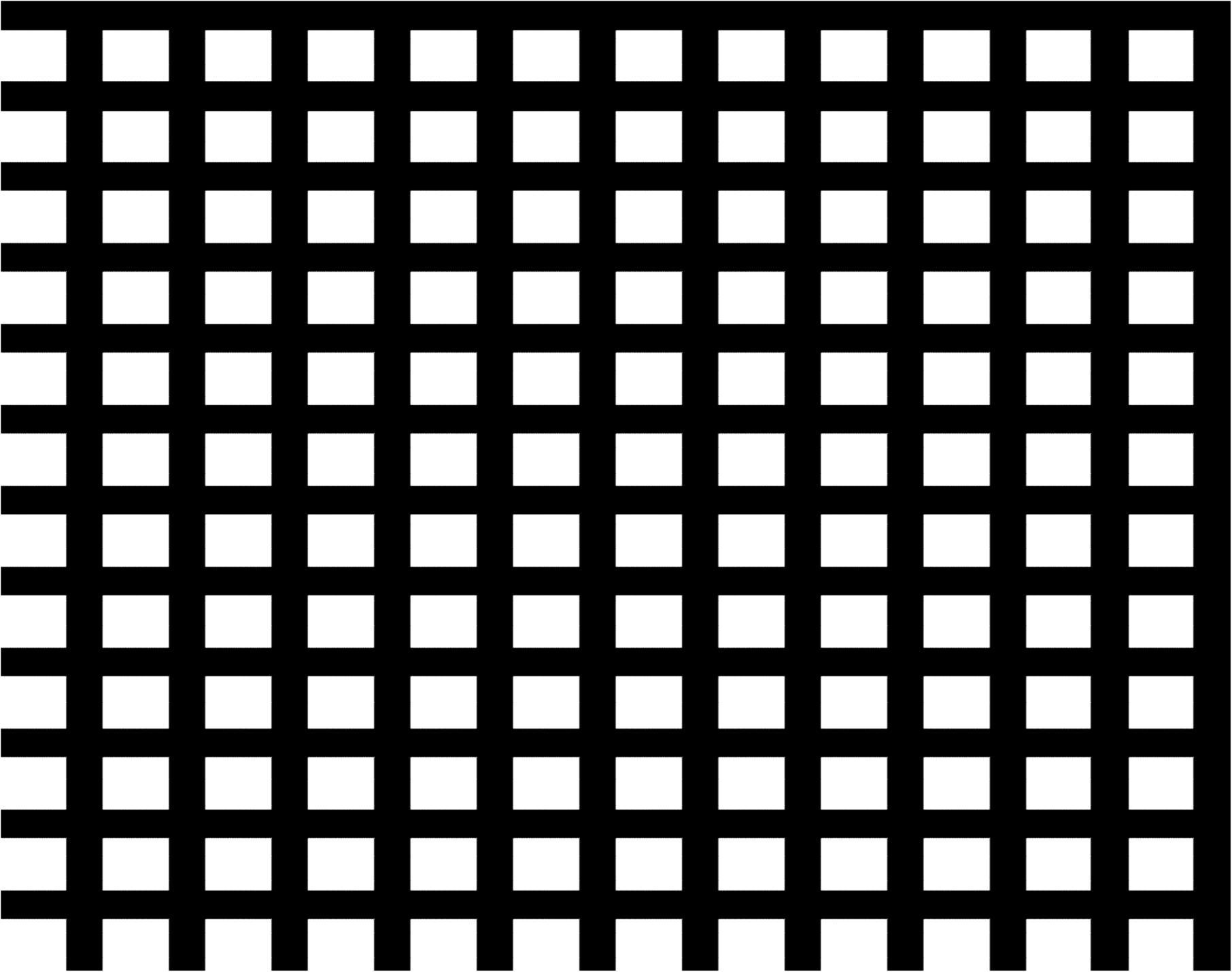}}\hfill
\caption{{\bf Cell microstructures.} Cell structures  D - Amplitude (left) and  E - Volume Fraction (right).}
  \label{fig:micro2}
\end{figure}

\subsubsection{Explicit calculations of homogenized microstructures}

The locally periodic microstructure models (\ref{AMP} $-$ \ref{VF2D}) admit homogenized matrix functions of the form $\aeff(m,\cdot) = A(m)$.
\noindent In two dimensions, calculations of \refeq{effcoeff} can be made explicit. 

\subsubsection*{\bf Layered materials (\ref{AMP} and \ref{VF})} 

The cell problems \refeq{cellb}  can be expressed as
\begin{align*}
-\frac{\partial}{\partial{y_{2}}}  ({a}(m, y_2) \frac{\partial}{\partial{y_{2}}}\chi_{1} )&=0,\\ 
-\frac{\partial}{\partial{y_{2}}}  ({a}(m, y_2) \frac{\partial}{\partial{y_{2}}}\chi_{2} )&= \frac{\partial}{\partial{y_{2}}} {a}(m, y_2),
\end{align*}
where the solutions are of the form $\chi=(\chi_{1}(m, y_2) , \chi_{2}(m, y_2) )$. Integration from $0$ to $y_{2}$ gives
\begin{align}
{a}(m, y_2) \frac{\partial \chi_{1}}{\partial{y_{2}}} &= c_{1},\labeleq{bdc1}\\
{a}(m, y_2) \frac{\partial \chi_{2}}{\partial{y_{2}}} &= -{a}(m, y_2) +d_{1},\labeleq{bdc2}
\end{align}
for some constants $c_{1}(m)$ and $d_{1}(m)$. Since ${a}$ is strictly positive, we can divide \refeq{bdc1} and \refeq{bdc2} by ${a}(m, y_2) $ and integrate from $0$ to $y_{2}$ again
\begin{align*}
 \chi_{1} &= c_{1}\int_{0}^{y_{2}}\frac{1}{{a}(m, \xi) }d\xi+c_{2},\qquad\chi_{2} = -y_{2}+d_{1}\int_{0}^{y_{2}}\frac{1}{{a}(m, \xi) }d\xi+d_{2}.
\end{align*}
Now, using periodicity, $\chi_{l}(0,m)=\chi_{l}(1,m)$ it follows  that $c_{1}=0,$ and $d_{1}=\<{{a}(m, \cdot) ^{-1}}\>^{-1}$.
Therefore \refeq{bdc1} and \refeq{bdc2} become
\begin{align*}
{a}(m, y_2) \frac{\partial \chi_{1}}{\partial{y_{2}}} &=  0,\\
{a}(m, y_2) \frac{\partial \chi_{2}}{\partial{y_{2}}} &= -{a}(m, y_2) +\<{{a}(m, \cdot) ^{-1}}\>^{-1}.
\end{align*}
Substituting these expressions into \refeq{effcoeff} results in the explicit form of the homogenized coefficient,
\begin{align}
\qquad &A(m)= \begin{pmatrix} \langle {a(m, \cdot)}\rangle &0 \\
0&\langle {a(m, \cdot)}^{-1}\rangle^{-1}\end{pmatrix}.\label{eq:effamp}
\end{align}

 \subsubsection*{\bf Materials with oriented layers (\ref{ANG})} 
Suppose $a$ is of the form $ \refeq{angle} $ with $\hat{a}(y)=\hat{a}(y_{2})$ for all $y=(y_{1}, y_{2})$ and  $\sigma=\sigma_{m}$. 
For a bounded set $\Omega\subset \RR^{2}$, consider the scalar problems
\begin{align*}
&\int_{\Omega}\nabla \psi \cdot \hat{a}(\x/\epsilon) \nabla u^{\epsilon} d\x= 0,  \quad  \forall \psi \in H_{0}^{1}(\Omega)  \quad \text{ for } u^{\epsilon}\in H_{0}^{1}(\Omega), \\
&\int_{\Omega}\nabla \psi \cdot {\hat{A}} \nabla \ueff d\x = 0,  \quad  \forall \psi \in H_{0}^{1}(\Omega)  \quad\text{ for }  \ueff\in H_{0}^{1}(\Omega).
\end{align*}

Now consider the change of variables $\x = \sigma\y$ where $\sigma$ is an orthogonal transformation from $\RR^{2}$ to $\RR^{2}$. We obtain the Dirichlet problems for $\Omega'=\sigma^{-1}\Omega$,
\begin{align*}
&\int_{\Omega'}\nabla_{y} \psi \cdot \hat{a}(\sigma\y/\epsilon) \nabla_{y} u^{\epsilon}(\sigma\y) d\y= 0,\\
&\int_{\Omega'}\nabla_{y} \psi \cdot \sigma \hat{A} \sigma ^{-1}\nabla_{y} \ueff(\sigma \y) d\y = 0.
\end{align*}

Since $\ueps(\sigma \y)\rightharpoonup \ueff(\sigma y)$ in $H^{1}_{0}(\Omega')$, it follows that the homogenized coefficient corresponding to  \refeq{angle}  is $\sigma \hat{A} \sigma^{-1}$, or
\begin{align}
\qquad &A(m)=\sigma_{m}^T\begin{pmatrix} \langle \hat{a}\rangle &0 \\
0&\langle \hat{a}^{-1}\rangle^{-1}\end{pmatrix}
 \sigma_{m}.\label{eq:effangle}
\end{align}

\subsubsection*{\bf Homogenization of cell structures (\ref{AMP2d} and \ref{VF2D})} 
It is well known \cite{Jikov2011, Pavliotis2008} that the homogenized coefficient corresponding a separable function of the form \refeq{amp2d} is the diagonal matrix
\begin{align}
A(m)=  \begin{pmatrix} \langle {a_1(m, \cdot)}^{-1}\rangle^{-1}\langle a_{2}(m, \cdot)\rangle  &0 \\
0& \langle {a_2(m, \cdot)}^{-1}\rangle^{-1}a_{1}(m, \cdot)\rangle\end{pmatrix}.\label{eq:effamp2d}
\end{align}

\noindent Matrix functions $\mathcal{A}(y) = a(m,y)$Id, where $a(m, y)$ has the form \refeq{vf2d}, can be derived explicitly. The solutions to the cell problems \refeq{cellb} corresponding to $a(m,y)$ of the type \refeq{vf2d} are equivalent to
\begin{align}
\frac{\partial}{\partial{y_{i}}}  \({a}(m, y)\frac{\partial\chi_{k}}{\partial{y_{i}}} \)&=-\frac{\partial}{\partial{y_{i}}} {a}(m, y),  &i=1, 2, \quad k=i,\labeleq{unk1}\\ 
\frac{\partial}{\partial{y_{i}}}  \({a}(m, y)\frac{\partial\chi_{k}}{\partial{y_{i}}} \)&=0,  &i=1, 2, \quad k\neq i. \nonumber
\end{align}
For $i=1$, integration from $0$ to $y_{1}$ gives \begin{align}
 {a}(m,y)\frac{\partial \chi_{1}}{\partial{y_{1}}} &= -{a}(m, y)+c_{1},\labeleq{unk2}\\
 {a}(m,y)\frac{\partial \chi_{2}}{\partial{y_{1}}} &=\tilde{c}_{1},\labeleq{unk3}
\end{align}
where $c_{1} = c_{1}(m, y_{2})$ and $\tilde{c}_{1}=\tilde{c}_{1}(m, y_{2})$. Since $a(m,y)$ is positive, we can divide by $a(m,y)$ and integrate from $0$ to $y_{1}$ again, giving
\begin{align*}
\chi_{1} &= -y_{1}+c_{1}\int_{0}^{y_{1}}\frac{1}{{a}(m, y)}dy_{1}+c_{2},\\
\chi_{2} &= \tilde{c}_{1}\int_{0}^{y_{1}}\frac{1}{{a}(m, y)}dy_{1}+\tilde{c_{2}},
\end{align*}
where $c_{2}$, $\tilde{c}_{2}$ are also  functions of only  $m$ and $y_{2}$. Applying the periodic boundary conditions  $\chi_{1}|_{y_{1}=0}=\chi_{1}|_{y_{1}=1}$ results in
\begin{align*}
c_{1}(m,y_{2})&=\({\int_{0}^{1}\frac{1}{{a}(m, y)}dy_{1}}\)^{-1},\\
\tilde{c}_{1}(m,y_{2}) &=0.
\end{align*}
Therefore \refeq{unk2} and \refeq{unk3} become
\begin{align*}
 {a}(m, y)\frac{\partial \chi_{1}}{\partial{y_{1}}} &= -{a}(m, y)+\({\int_{0}^{1}\frac{1}{{a}(m, y)}dy_{1}}\)^{-1},\\
{a}(m,y)\frac{\partial \chi_{2}}{\partial{y_{1}}} &=0.
\end{align*}
A similar argument applies to $i=2$, resulting in
\begin{align*}
{a}(x,y) \nabla_{y}\chi = \begin{pmatrix}-{a}(m, y)+\(\int_{0}^{1}\frac{1}{a(m, y)} dy_{1}\)^{-1} & 0 \\
0& -{a}(m, y)+\({\int_{0}^{1}\frac{1}{{a}(m, y)}dy_{2}}\)^{-1}\end{pmatrix}.
\end{align*}

Substituting this expression into \refeq{effcoeff} results in the closed form for the isotropic homogenized coefficient $A(m) = \bar{a}(m) \text{Id}$,
where 
\begin{align}
\bar{a}(m) = \int_{0}^{1}\({\int_{0}^{1}\frac{1}{{a}(m, y)}dy_{1}}\)^{-1}dy_{2} =  \frac{m k_{1}k_{2}}{m (k_{2}-k_{1})+k_{1}} + (1-m) k_{2}. \labeleq{unkeffAx}
\end{align}

\section{Multiscale analysis for inverse conductivity problems}\label{sec:msinv}

Let $\Omega$ be an open, bounded region in $\RR^d$, $d\geq 2$, that has a sufficiently smooth boundary $\partial \Omega$. The forward model studied in the classical theory of inverse problems is the Dirichlet problem,
\begin{align}
 -\div\(A\nabla u\) = 0 \text{ in } \Omega.\label{eq:eit}
\end{align}
The coefficient $A$ is in general a uniformly positive definite, symmetric, $d\times d$ matrix \cite{Sylvester1990a, Uhlmann}.

\begin{definition} For $g,h\in H^{1/2}(\partial\Omega)$ let $u\in H^{1}(\Omega)$ be the weak solution to  \refeq{eit} subject to  $u|_{\partial\Omega}=g $, and let $v$ be an arbitrary function in $H^{1}(\Omega)$ that satisfies $v|_{\partial\Omega}=h$. The Dirichlet-to-Neumann map $\Lambda_{A}:H^{1/2}(\partial\Omega)\rightarrow H^{-1/2}(\partial\Omega)$ is defined by
\begin{align*}
\langle\Lambda_{A}g, h\rangle = \int_{\Omega}A(x) \nabla u(x)\cdot \nabla v(x) dx.
\end{align*}
\end{definition}

The inverse boundary value problem of Calder\'{o}n \cite{Calderon} is to recover $A$ from knowledge of the \DtN map $\Lambda_{A}$. In general, the inverse problem is highly ill-posed. A main challenge is to prove the stability of the problem, that is, the continuous dependence of the unknown $A$ on the data $\Lambda_{A}$.

An approach that can be applied to anisotropic coefficients assumes the prior knowledge of a parametrization 
\begin{align}
m(x)\rightarrow A(m(x),x).\labeleq{macroparam}
\end{align}

\begin{definition}[Adapted from Definition 2.2 in \cite{Alessandrini2001}]Given $p>d$, $E>0$, and denoting by $\text{Sym}_{d}$ the class of $d\times d$ real-valued symmetric matrices, we say $A(\cdot, \cdot)\in \mc{H}$ if the following conditions are satisfied:
\begin{align*}
A&\in W^{1, p}([\lambda^{-1}, \lambda]\times\Omega, \text{Sym}_{d}), \\
D_{m}A&\in W^{1, p}([\lambda^{-1}, \lambda]\times\Omega),\\
&\textrm{supess}_{m\in\I}\(\|A(m,\cdot)\|_{L^{p}(\Omega)} +\|D_{x}A(m,\cdot)\|_{L^{p}(\Omega)},\right.\\
&\left. \qquad \qquad \qquad + \|D_{m}A(m,\cdot)\|_{L^{p}(\Omega)}+\|D_{m}D_{x}A(m,\cdot)\|_{L^{p}(\Omega)}  \)\leq E,\\
\lambda^{-1}|\xi|^{2}& \leq A(m,x)\xi \cdot \xi \leq \lambda |\xi|^{2} \text{ for a.e. $x\in\Omega$ and  all } m\in\I, \xi\in \RR^d.
\end{align*}
The essential supremum is denoted by \textit{supess}. In addition, the following monotonicity condition must also be satisfied:
\begin{align}
D_{m}A(m,x)\xi \cdot \xi \geq E^{-1} |\xi|^{2}\labeleq{monotonicity}
\end{align}
for a.e. $x\in\Omega$ \text{and all} $m\in\I, \xi\in \RR^d$.
\end{definition}
The following theorems, adapted to our context, are from \cite{Alessandrini2001}. The first is a boundary stability result and the second gives a global uniqueness result for matrices $A(\cdot,\cdot)\in\mc{H}$.

\begin{theorem}[\cite{Alessandrini2001}, Theorem 2.1]\label{thm:Aless-stability}
Given $p>d$, let $\Omega$ be a bounded Lipschitz domain with constants $L$, $r$, $h$. Let $m_{1},m_{2}$ satisfy
\begin{align}
\lambda^{-1} \leq m_{1}(x),m_{2}(x)\leq \lambda \text{ for all } x\in\Omega,\labeleq{ales-ml}\\
\| m_{1}\|_{W^{1,p}(\Omega)}, \|m_{2}\|_{W^{1,p}(\Omega)} \leq E \labeleq{ales-mE}.
\end{align}
Let $A$ be sufficiently bounded and monotone; then, 
\begin{align*}
\|A(m_{1},\cdot) - A(m_{2},\cdot)\|_{L^{\infty}(\partial\Omega)}\leq C \|\Lambda_{A(m_{1},\cdot)} - \Lambda_{A(m_{2},\cdot)}\|_{*}.
\end{align*}
Here $C$ is a constant that depends only on $d, p, L, r, diam(\Omega), \lambda$ and $E$.
\end{theorem}

\begin{theorem}[\cite{Alessandrini2001}, Theorem 2.4]\label{thm:Aless1}
Suppose  $m_{1}, m_{2}$ satisfy \refeq{ales-ml} and \refeq{ales-mE}. Suppose also that $\Omega$ can be partitioned into a finite number of domains $\{\Omega_{j}\}_{j\leq N}$, with $m_{1}-m_{2}$ analytic on each $\overline{\Omega}_{j}$. Then, $\Lambda_{A(m_{2},\cdot)}=\Lambda_{A(m_{2},\cdot)}$ implies that $A(m_{1},\cdot)=A(m_{2},\cdot) \text{ in } \Omega$.
\end{theorem}

Our main result is a direct application of this theory to the inverse homogenization problem of determining $\aeps$ from measurements of homogenized solutions.

\begin{theorem}{} \label{thm:main}Let $a(\cdot, \cdot)$ be a $d\times d$, bounded, symmetric matrix function that is locally periodic, uniformly positive definite and Lipschitz in the first variable. Furthermore, suppose $a(\cdot, \cdot)$ admits a homogenized coefficient $\aeff\in \mc{H}$.  For functions $m_{1}$ and $m_{2}$ satisfying the assumptions of Theorem \ref{thm:Aless1}, define $\aeps_i=a(m_i(x), x/\epsilon)$, $\aeff_i=\aeff(m_i(x), x)$ for $\epsilon>0$ and $i=1, 2$. 

Then, $\Lambda_{\aeff_{1}}=\Lambda_{\aeff_{2}}$ implies that $a^{\epsilon}_{1}=\aeps_{2} \text{ in } \Omega$. Furthermore, there is a constant $C>0$ with
\begin{align}
\|\aeps_{1} - \aeps_{2}\|_{L^{\infty}(\partial\Omega)}\leq C\|\Lambda_{\aeff_{1}}-\Lambda_{\aeff_{2}}\|_{*}\labeleq{mainresult}.
\end{align}
\end{theorem}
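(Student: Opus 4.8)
The plan is to push the whole problem down to the homogenized level, apply the Alessandrini--Gaburro results (Theorems~\ref{thm:Aless-stability} and~\ref{thm:Aless1}) to the effective coefficients $\aeff_i(x):=\aeff(m_i(x),x)$, and then lift the conclusion back to $\aeps_i$ using two structural facts: the quantitative monotonicity of $D_m\aeff$ built into $\aeff\in\mc{H}$, and the Lipschitz dependence of $a$ on its first argument.

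First I would verify that the effective coefficients satisfy the hypotheses of the quoted theorems. By assumption $\aeff(\cdot,\cdot)\in\mc{H}$ and $m_1,m_2$ satisfy~\refeq{ales-ml} and~\refeq{ales-mE}; the hypothesis that $m_1-m_2$ is piecewise analytic on $\Omega$ supplies exactly the finite partition $\{\Omega_j\}_{j\le N}$ with $m_1-m_2$ analytic on each $\overline{\Omega}_j$ required by Theorem~\ref{thm:Aless1}. Hence $\Lambda_{\aeff_1}=\Lambda_{\aeff_2}$ forces $\aeff(m_1(x),x)=\aeff(m_2(x),x)$ for every $x\in\Omega$, and Theorem~\ref{thm:Aless-stability} gives the boundary estimate $\|\aeff_1-\aeff_2\|_{L^\infty(\partial\Omega)}\le C\|\Lambda_{\aeff_1}-\Lambda_{\aeff_2}\|_*$.

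The heart of the argument is then to invert the homogenization map $t\mapsto\aeff(t,x)$ at each fixed $x$. Assuming without loss of generality $m_1(x)\ge m_2(x)$ and using the monotonicity condition~\refeq{monotonicity}, for every $\xi\in\RR^d$
\begin{align*}
\bigl(\aeff(m_1(x),x)-\aeff(m_2(x),x)\bigr)\xi\cdot\xi=\int_{m_2(x)}^{m_1(x)}D_m\aeff(t,x)\xi\cdot\xi\,dt\ \ge\ E^{-1}|m_1(x)-m_2(x)|\,|\xi|^2,
\end{align*}
so the symmetric matrix $\aeff(m_1(x),x)-\aeff(m_2(x),x)$ has least eigenvalue at least $E^{-1}|m_1(x)-m_2(x)|$, whence $|m_1(x)-m_2(x)|\le E\|\aeff(m_1(x),x)-\aeff(m_2(x),x)\|$ for a.e.\ $x$ (and symmetrically when $m_2(x)\ge m_1(x)$). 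Applying this throughout $\Omega$ together with the previous paragraph gives $m_1\equiv m_2$, and therefore $\aeps_1=a(m_1(x),x/\epsilon)=a(m_2(x),x/\epsilon)=\aeps_2$. Applying it on $\partial\Omega$, combined with the Lipschitz bound $|a(m_1(x),x/\epsilon)-a(m_2(x),x/\epsilon)|\le L|m_1(x)-m_2(x)|$ and the boundary estimate above, yields
\begin{align*}
\|\aeps_1-\aeps_2\|_{L^\infty(\partial\Omega)}\le LE\,\|\aeff_1-\aeff_2\|_{L^\infty(\partial\Omega)}\le LE\,C\,\|\Lambda_{\aeff_1}-\Lambda_{\aeff_2}\|_*,
\end{align*}
which is~\refeq{mainresult} after renaming the constant.

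The step I expect to be the genuine obstacle is the passage from the matrix identity $\aeff_1=\aeff_2$ to the scalar identity $m_1=m_2$: this is precisely where the strict monotonicity~\refeq{monotonicity} of $D_m\aeff$, and not merely uniform ellipticity of $\aeff$, is indispensable, and one must check that the same pointwise inequality simultaneously controls the a.e.\ statement in $\Omega$ and the trace on $\partial\Omega$. The rest is bookkeeping: confirming that $\aeff$ inherits symmetry and the $W^{1,p}$ and ellipticity bounds from $a$ and the $m_i$ so that $\aeff\in\mc{H}$ is consistent, and observing that no $\epsilon\to 0$ or compactness argument is needed, since the data in the statement are the effective maps $\Lambda_{\aeff_i}$ themselves rather than the physical maps $\Lambda_{\aeps_i}$.
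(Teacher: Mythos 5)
Your proposal is correct and follows essentially the same route as the paper: reduce to the Alessandrini--Gaburro stability and uniqueness results for the effective coefficient $\aeff(m_i(x),x)$, convert control of $\aeff_1-\aeff_2$ into control of $m_1-m_2$, and then transfer to $\aeps_1-\aeps_2$ via the Lipschitz dependence of $a$ on its first argument. The only difference is one of explicitness: the paper simply cites that the proof of Theorem~\ref{thm:Aless1} in the reference already contains the intermediate bound $\|m_1-m_2\|\leq C_1\|\Lambda_{\aeff_1}-\Lambda_{\aeff_2}\|_{*}$, whereas you rederive that bound from the monotonicity condition \refeq{monotonicity} by integrating $D_m\aeff$ in $m$ --- a worthwhile addition, since it isolates exactly where strict monotonicity (rather than mere ellipticity) is used.
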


\begin{proof}
 A part of the proof of Theorem \ref{thm:Aless1} in \cite{Alessandrini2001} involves showing that for $A\in \mc{H}$ there exists a positive constant $C_{1}$ with  $\|m_{1}-m_{2}\|\leq C_{1}\|\Lambda_{A(m_{1},\cdot)} - \Lambda_{A(m_{2},\cdot)}\|_{*}$. The Lipschitz continuity of $a$ gives the stability result,
 \[\|a(m_{1}, \cdot) - a(m_{2}, \cdot)\|_{L^{\infty}(\partial\Omega)} \leq  C \|m_{1}-m_{2}\|\leq C_{1}\|\Lambda_{A(m_{1},\cdot)} - \Lambda_{A(m_{2},\cdot)}\|_{*}. \]

\end{proof}

Calder\'{o}n's inverse problem is severely ill-posed, even in the case of isotropic coefficients. In order to resolve stability issues (described in \cite{Alessandrini2007a}), some approaches replace a-priori regularity assumptions for $A$ with different assumptions that are better suited for applications. For example, it is known that  if $A$ is a piecewise constant scalar function, the problem is Lipschitz stable. However, even in this case, the stability constant grows exponentially with the number of unknowns \cite{Alessandrini2005}. As a result, the techniques in this paper are applied to parameters $m$ of low dimension.

\subsection{Sufficient conditions for microscale recovery}

The functions $\aeps$ defined in Theorem  \refeq{mainresult} admit a homogenized matrix $A$ that is a symmetric, uniformly positive definite matrix function with bounded elements. The crucial step is to show that $A(m, \cdot)$ is monotone in the sense of \refeq{monotonicity}. 

The homogenized coefficients corresponding to microstructures of type \ref{AMP} and \ref{VF} satisfy the monotonicity condition if there is a constant $E>0$ with 
\begin{align*}
D_{m} \<{a}(m, \cdot) ^{-1} \>^{-1}>E^{-1}\text{ and } D_{m} \<{a}(m, \cdot) \>>E^{-1}.
\end{align*}

The homogenization of microstructures of type \ref{ANG} does not satisfy the monotonicity condition; the matrix \[D_{m}A = \(\langle \hat{a}^{-1}\rangle^{-1}-\langle \hat{a}\rangle\) \begin{pmatrix}-\sin({2\pi m}) &\cos({2 \pi m}) \\
\cos({2 \pi m}) &\sin({2\pi m})\end{pmatrix}\] has eigenvalues $\pm1$. Therefore, Theorem \ref{thm:Aless1} cannot be directly applied. In \S\ref{sec:numericalresults}, we present numerical results for this case. 

The homogenization of cell structures of type \ref{AMP2d} satisfy the monotonicity condition if there is a constant $E>0$ with 
\begin{align*}
D_{m} \<{a}_{1}(m, \cdot) ^{-1} \>^{-1}>E^{-1}\text{ and } D_{m} \<{a}_{2}(m, \cdot) ^{-1} \>^{-1}>E^{-1}.
\end{align*} The homogenized cell structure \ref{VF2D} satisfies the monotonicity requirement  if

\begin{align*}
D_{m}\bar{a}(m)  =  \frac{k_{1}^{2}k_{2}}{(m (k_{2}-k_{1})+k_{1})^{2}}  - k_{2} >E^{-1}
\end{align*}
for all $m\in I_{\lambda}$. Since $0<\lambda^{-1}<m<\lambda<1$, it follows that monotonicity is guaranteed if $k_{1}$ and $k_{2}$ satisfy
\begin{align*}
k_{1}>\sqrt{k_{2}(k_{2}+E^{-1})}.
\end{align*}



It should be noted that the conditions given here are sufficient, but not necessary. In fact, an analogue of Theorem \ref{thm:Aless1} holds in cases when the functions $D_{m}A$ are not strictly monotone \cite{Alessandrini2001}. 

\subsection{Mismatch in boundary measurements}\label{sec:Jm}

The theory given so far justifies the uniqueness and boundary stability of solutions to an inverse homogenization problem of determining a microscale parameter $m$ from macroscopic data. In the numerical experiments, (\ref{microinv}) is solved by matching macroscopic predictions with highly oscillatory data. The justification here is based on the theory of homogenization.

Let $G:L^{\infty}(\Omega)\rightarrow H_{0}^{1}(\Omega)$ be the solution operator corresponding to $\refeq{eit}$.

\begin{theorem}\label{thm:minimizer} 
Let $f^{\epsilon}:I_{\lambda}\rightarrow C^{\infty}(\partial\Omega)$ and  $F:I_{\lambda}\rightarrow C^{\infty}(\partial\Omega)$ be given by
\begin{align*}
f^{\epsilon}(m) = \aeps({m},\cdot) \nabla \ueps({m},\cdot) \cdot \hat{n}, \qquad F(m) = \aeff(m,\cdot)\nabla \ueff(m,\cdot) \cdot \hat{n},
\end{align*} where $\hat{n}$ is a vector that is normal to $\partial \Omega$, $\aeps(m, \cdot)$ is of the form \refeq{aeps-m}, $A(m,\cdot)$ is the homogenized coefficient corresponding to $\aeps(m,\cdot)$,  $\ueps(m,\cdot)=G(\aeps(m,\cdot))$, and $U(m,\cdot)=G(A(m,\cdot))$. 
For a fixed parameter $\bar{m}\in I_{\lambda}$, define the minimization functionals
\begin{align}
\mathcal{J}^{\epsilon}(m) &= \int_{\partial \Omega} \( f^{\epsilon}(\bar{m})-F(m)\)^{2} \varphi ds, \qquad 0<\epsilon<1,\qquad  \varphi\in C^{\infty}(\partial \Omega).\labeleq{Jm}\\
\mathcal{J}(m) &= \lim_{\epsilon\rightarrow 0}\mc{J}^{\epsilon}(m).
\end{align}
Then, $\overline{m}$ is the unique minimizer of $\mathcal{J}$. \end{theorem}

\begin{proof} By expanding the integrand in \refeq{Jm},
\begin{align*}
\mathcal{J}^{\epsilon}(m) &= \int_{\partial \Omega} \(f^{\epsilon }(\bar{m})- F(\bar{m})+ F(\bar{m}) -F(m)\)^{2} \varphi ds\\
&= I_{1}+I_{2}+I_{3},\end{align*}
where 
\begin{align*}
I_{1}&=\int_{\partial \Omega} \(f^{\epsilon }(\bar{m})- F(\bar{m})\)^{2}\varphi ds\\
I_{2}&=2 \int_{\partial \Omega} \(f^{\epsilon }(\bar{m})- F(\bar{m})\)(F(\bar{m}) -F(m)) \varphi ds\\
I_{3}&=\int_{\partial \Omega} \(F(\bar{m}) -F(m)\)^{2}\varphi ds.
\end{align*}

The term $I_{1}$ is independent of $m$.  For the second term, note that since $\ueps$ and $U$ are smooth solutions to \refeq{eit}, for functions $\psi\in C^{\infty}(\Omega)$,
\begin{align*}
\int_{\Omega}\(\nabla\cdot\aeps\nabla\ueps- \nabla\cdot A\nabla U\)\cdot \psi ds = 0,\end{align*}
and therefore, by Green's theorem,
\begin{align*} \int_{\Omega}\(\aeps\nabla\ueps- A\nabla U\)\cdot \nabla \psi ds = \int_{\partial\Omega}\(\aeps\nabla\ueps\cdot{\hat{n}}- A\nabla U\cdot \hat{n}\) \psi ds.
\end{align*}
Homogenization theory gives the convergence $\aeps\nabla\ueps \rightharpoonup \aeff({\overline{m}})\nabla\ueff({\overline{m}})$ weakly in $L_{2}(\Omega)$ as $\epsilon\rightarrow 0$. Since $\psi =(F(\bar{m}) -F(m))\varphi$ is smooth, it follows that
\begin{align*}
 \lim_{\epsilon\rightarrow 0}\int_{\partial \Omega}\(f^{\epsilon }- F(\bar{m})\)\(F(\bar{m}) -F(m)) \)\varphi ds 
= 0.
\end{align*}
Therefore, $\mathcal{J}(m)=\lim_{\epsilon\rightarrow 0}\mathcal{J}^{\epsilon}(m)=I_{3}$. The term $I_{3}$ is minimized  when $F(\bar{m})=F(m)$, and it follows from the results in the previous section the minimizer is $m=\overline{m}$.
\end{proof}

\section{HMM for the forward problem}\label{sec:hmm}

In the simulations of the macroscopic forward model \refeq{eff}, numerical homogenization is performed using the finite element heterogeneous multiscale method (FE-HMM). Here we provide a brief presentation of the scheme; further details of various HMM formulations can be found in \cite{Abdulle2012, E2003, Ming2004a}.

The FE-HMM scheme is designed for approximations of the homogenized equation \refeq{eff} when the coefficients in the effective model are not known explicitly. By employing a microscale solver on local subdomains,  the homogenized coefficients can be estimated in an efficient way.

The macroscopic solver is the traditional $\mathcal{P}_{k}$ finite element method on a coarse triangulation $\mc{T}_{H}$ of the domain containing elements of size $H>\epsilon$. The macroscale bilinear form is defined for functions $V$ and $W$ lying in the finite element space $X_{H}$,  
\begin{align}\mathcal{B}(V,W) := \int_{\Omega}  \nabla V \cdot \ahmm(x) \nabla W dx+ \int_{\Omega}b W V dx, \labeleq{hmmbl}\end{align}
 where $\ahmm$ is not known explicitly. The first integral in \refeq{hmmbl} is approximated  using numerical quadrature points $\{x_{l}\}$ and weights $\{\omega_{l}\}$,
\begin{align}
\int_{\Omega}  \nabla V \cdot \ahmm(x) \nabla W dx\simeq \sum_{K\in \mc{T}_{H}}|K|\sum_{x_{l}\in K}\omega_{l}\(\nabla V \cdot \ahmm \nabla W\)(x_{l}),\labeleq{hmmquad}
\end{align}
where $|K|$ is the measure of $K$.

The stiffness matrix entries are estimated at each quadrature point $x_{l}$ by using a microscale solver on subdomains $I_{\delta}(x_{l}):=x_{l}\pm \tfrac{\delta}{2} I$.  Then effective behavior of $\aeps$ is captured locally through the solution of cell problems
 \begin{align}
 -\nabla\cdot( a^{\epsilon} \nabla v_{l}^{\epsilon})  = 0 \text{ in } I_{\delta}(x_{l}),\qquad v_{l}^{\epsilon}=V_{l}\text{ on }\partial I_{\delta}(x_{l}), \label{eq:cell}
\end{align}
where $V_{l}$ is the linear approximation of $V$ at $x_{l}$. 

Again, a standard $\mathcal{P}_{k}$  finite element solver is used on a triangulation $\mc{T}_{h}^{l}$ of the subdomains. The spacing $h<\epsilon$ is chosen sufficiently small in order to resolve the microscale. 
Figure \ref{fig:HMMmesh} contains a diagram of the macro-micro grid coupling in a typical FEM-HMM formulation.

Then, the term $(\nabla V \cdot \ahmm \nabla W)(x_{l})$ in \refeq{hmmquad} can be estimated by
\begin{align*}(\nabla V \cdot \ahmm \nabla W)(x_{l}) \simeq \frac{1}{\delta^{d}} \int_{I_{\delta}(x_{l})}\nabla v^{\epsilon}_{l}\cdot (a^{\epsilon} \nabla w^{\epsilon}_{l}) dx.  
\end{align*} 
\noindent The HMM bilinear form is then defined by
\begin{align*}
\mathcal{B}_{HMM}(V,W):= \sum_{K\in \mc{T}_{H}}{|K|}\sum_{x_{l}\in K}\omega_{l}\(\ \frac{1}{\delta^{d}}\int_{I_{\delta}(x_{l})}\nabla v^{\epsilon}_{l}\cdot (a^{\epsilon} \nabla w^{\epsilon}_{l}) dx +\(b W V\)(x_{l})\).
\end{align*}
\noindent Finally, we have that the HMM solution, $\uhmm \in g+X_{H}$, solves $\mc{B}_{HMM}(V, V) = (f, V)$, for all $V\in X_{H}$.

\begin{figure}[htbp]
\caption{{\bf FEM-HMM Discretization.} An illustration of macro-micro coupled grids used in FE-HMM for elliptic PDEs.}
\begin{center}
\tikzstyle{mybox} = [draw=black!40 fill=lgray, very thick,
    rectangle, rounded corners, inner sep=10pt, inner ysep=20pt]
\tikzstyle{fancytitle} =[fill=black!40, text=white]

\begin{tikzpicture}[scale=1,every node/.style={minimum size=1cm},on grid]
		
    \begin{scope}[ xshift=-163,every node/.append style={
    yslant=0.5,xslant=-1},yslant=0,xslant=.0
            ]

 \pgftransformcm{0}{1}{\xcoord}{\ycoord}{\pgfpointorigin} 
    \path[clip, preaction={draw=black, very thick}] (5,0) -- (0,0) -- (0,5) -- (5,5) -- cycle;
    \foreach \x in {0,1,2,...,\rows} {
     \draw[black,thick, dashed] (0,\x) -- (\x,0);
        \draw[black,thick, dashed] (\x,0) -- (\x,\rows);
        \draw[black,thick, dashed] (0,\x) -- (\rows,\x);

        \foreach \y in {0, 1, 2, ..., \rows}{
        
         \draw[step=.05mm, black!50,thin] (\x+.15,\y+.15) grid (\x+.35,\y+.35);  
        \filldraw (\x+.25,\y+.25) circle (.5pt);
         \draw[step=.05mm, black!50,thin] (\x+.65,\y+.65) grid (\x+.85,\y+.85);  
        \filldraw (\x+.75,\y+.75) circle (.5pt);
           \draw[black!40, thick] (\x+.15,\y+.15) rectangle (\x+.35,\y+.35);  
         \draw[black!40, thick] (\x+.65,\y+.65) rectangle (\x+.85,\y+.85);  
         }
    }

    \end{scope}
    \begin{scope}[
    	xshift=60,every node/.append style={
    	    yslant=0,xslant=0},yslant=0,xslant=0 ]
        \fill[white,fill opacity=.9] (0,0) rectangle (3,3);
       
 \pgftransformcm{0}{1}{\xcoord}{\ycoord}{\pgfpointorigin} 
   \path[clip, preaction={draw=black!40, very thick}] (3,0) -- (0,0) -- (0,3) -- (3,3) -- cycle;
    \foreach \x in {0,1,2,...,\rowsmicro} {
     \draw[black!40] (0,\x/3) -- (\x/3,0);
        \draw[black!40] (\x/3,0) -- (\x/3,\rows/3);
        \draw[black!40] (0,\x/3) -- (\rows/3,\x/3);
    } 
       \draw[black!40,very thick] (0,0) rectangle (3,3);
          \filldraw (1.5,1.5) circle (1pt) node[align=left,below] {$x_{l}$};

  \end{scope}

    \draw[black!40, dashed] (-1.1,.65) -> (2.1,0);  
   \draw[black!40, dashed] (-1.1,.85) -> (2.1,3);  
   \draw[black!40, dashed] (-.9,.65) -> (5.15,.05);  
   \draw[black!40, dashed] (-.9,.85) -> (5.1,2.9);  
%

\end{tikzpicture}%
\label{fig:HMMmesh}
\end{center}
\end{figure}
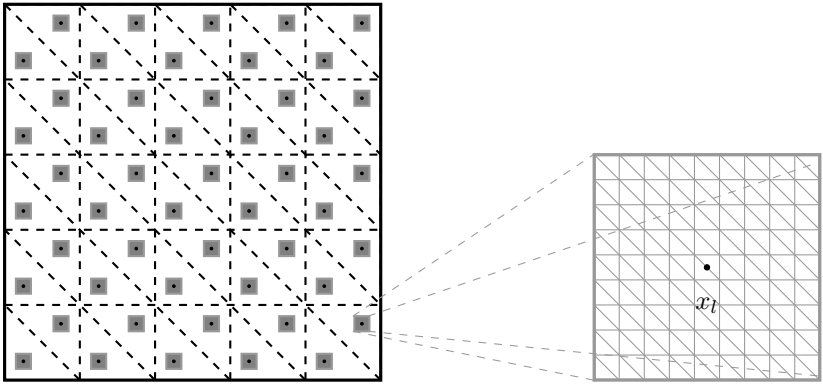

\subsection{Errors in forward modeling}

An analysis of the errors involved in the FE-HMM formulation for elliptic problems is found in \cite{Ming2004a}. The main result is the following theorem, assuming a $k$th order numerical quadrature scheme for $\refeq{hmmquad}$ that satisfies
\begin{align*}
\frac{1}{|K|}\int_{K}p(x)dx=\sum_{l=1}^{L}\omega_{l}p(x_{l})\quad \text{for all } p(x)\in \mathcal{P}_{2k-2},
\end{align*}
where $\omega_{l}>0$, $l=0, \hdots, L$.
\begin{theorem}[\cite{Ming2004a}, Theorem 1.1] Denote by $U\in H^{1}_{0}(\Omega)$, $\uhmm\in X_{H}$ the solutions to \refeq{eff} and the FE-HMM solution, respectively. Let
\begin{align*}
e(HMM) = \underset{x_{l}\in K, K\in \mc{T}_{H}}{\max}\|\aeff(x_{l}) - \ahmm(x_{l})\|,
\end{align*} 
where $\|\cdot\|$ is the Euclidean norm. If $\ueff$ is sufficiently smooth, and $\lambda I \leq \aeps \leq \Lambda I$ for $\lambda, \Lambda>0$, then there exists a constant $C$ independent of $\epsilon, \delta$ and $H$ such that
\begin{align*}
\|\ueff - \uhmm\|_{1}\leq C \(H^{k}+e(HMM)\),\\
\|\ueff - \uhmm\|_{0}\leq C \(H^{k+1}+e(HMM)\).\\
\end{align*}

\end{theorem}

Then $\uhmm\rightarrow U$ as $e(HMM)\rightarrow 0$. For the periodic homogenization problem it is also shown that 
\begin{align*}
e(HMM)\leq \begin{cases}
C\epsilon & I_{\delta}(x_{l})= x_{l}+\epsilon I\\
C(\frac{\epsilon}{\delta}+\delta) & \text{otherwise}.
\end{cases}
\end{align*}

A comparison of errors from using a purely macroscale solver and HMM is given in Figure \ref{fig:erreps} and Figure \ref{fig:errcellsize}. The full solution $\ueps$ to \refeq{ms} with $b=0$ and $f=1$ subject to Dirichlet boundary conditions $\ueps|_{\partial\Omega}=0$, is computed using direct numerical simulation on a fine mesh with element size $h=1/800$. Solutions $U$ of the homogenized equation \refeq{eff} are resolved on a coarse resolution mesh of element size $H=1/20$. We denote by  $U_{HOM}$ and $U_{HMM}$ the approximations of $U$ using analytic formulas and HMM, respectively.

 \newcommand{\legend}{\begin{center}
 \begin{tikzpicture}
\begin{customlegend}[legend columns=-1,legend style={draw,column sep=2ex}, legend entries={$\|\ueps - U_{HOM}\|_{L_2}$,$\|\ueps - U_{HMM}\|_{L_2}$ }]
    \addlegendimage{dashed, black,fill=black,sharp plot}\addlegendimage{black,fill=black,sharp plot}
    \end{customlegend}
\end{tikzpicture}\end{center}}

\begin{figure}
\label{fig:erreps}
\caption{Errors in the macroscopic solution  $\|\ueps - U\|_{L_{2}}$ as $\epsilon\to0$ using a purely macroscale solver (dashed) and FE-HMM (solid). HMM microscopic cells $I_{\delta}$ are of  size $\delta = 10\epsilon$.}
\begin{center}

\plotepserr{\amplitudeeps}{A - Amplitude}\plotepserr{\areafractioneps}{B - Volume Fraction}

\end{center}

\end{figure}

\begin{figure}
\begin{center}
\caption{Errors in macroscopic solution using a purely macroscale solver (dashed) and FE-HMM (solid) with  by varying HMM microscopic cell size $\delta$. Here $\epsilon=1/100$.}
    \newcommand{\plotcellsize}[2]{
\begin{tikzpicture}
\begin{axis}[width=.5\linewidth,
        xlabel={HMM cell size: ${\delta }/{\epsilon}$},
        plotstyle,
        ymin=0,
        xmin=2, xmax=10,
        xtick={2,4, ..., 10},
         title style={yshift=-1ex},
         title={#2}]
         \addplot[smooth,dashed, mark=.,black] table[x=cs,y=eff]  {#1};
\addplot[smooth,mark=.,black] table[x=cs,y=hmm] {#1}  ;
   \end{axis}
    \end{tikzpicture}}

  \plotcellsize{\amplitudecellsize}{A - Amplitude}\plotcellsize{\areafractioncellsize}{B - Volume Fraction}
\label{fig:errcellsize}
\end{center}
\end{figure}

In the case of unknown or random microstructure, HMM can be performed ``on the fly'', and computational time can be reduced using parallel solvers for the local cell problems. For periodic problems in two dimensions (as in this work), precomputing the cell problem solutions increases the efficiency of HMM.


\section{Numerical experiments}\label{sec:numericalresults}

In this section we present results of numerical simulations that demonstrate parameter inversion of elliptic equations  using homogenization theory and the ideas discussed in \S\ref{sec:msinv}. It is assumed that the parameter $\epsilon$, as well as the mapping $m\rightarrow \aeps(m)$ is known.

There is no additional regularization of the problem. 
The inverse problem (\ref{microinv}) is solved by minimizing the cost functional \refeq{Jm}
 It should be noted that stability is guaranteed only on the boundary of the domain.

 \begin{figure}
\caption{{\bf Synthetic data and predictions}. The plot contains the graph of the oscillatory Neumann data $\obs({\aeps}, \ueps)=\aeps\nabla\ueps\cdot n\vert_{\Gamma=\{0\leq x \leq 1, y=0\}}$ corresponding to the Dirichlet boundary condition $u|_{\partial \Omega} =\frac{1}{\sqrt{2\pi}}e^{-(x-1)^2}$, where $\aeps$ is given by \refeq{angle}. The macroscopic predictions $\obs({\aeff}, \ueff)=\aeff\nabla\ueff\cdot n\vert_{\Gamma}$ are also plotted. 
}
\label{fig:data}
\begin{center}
{\includegraphics[width=1\textwidth]{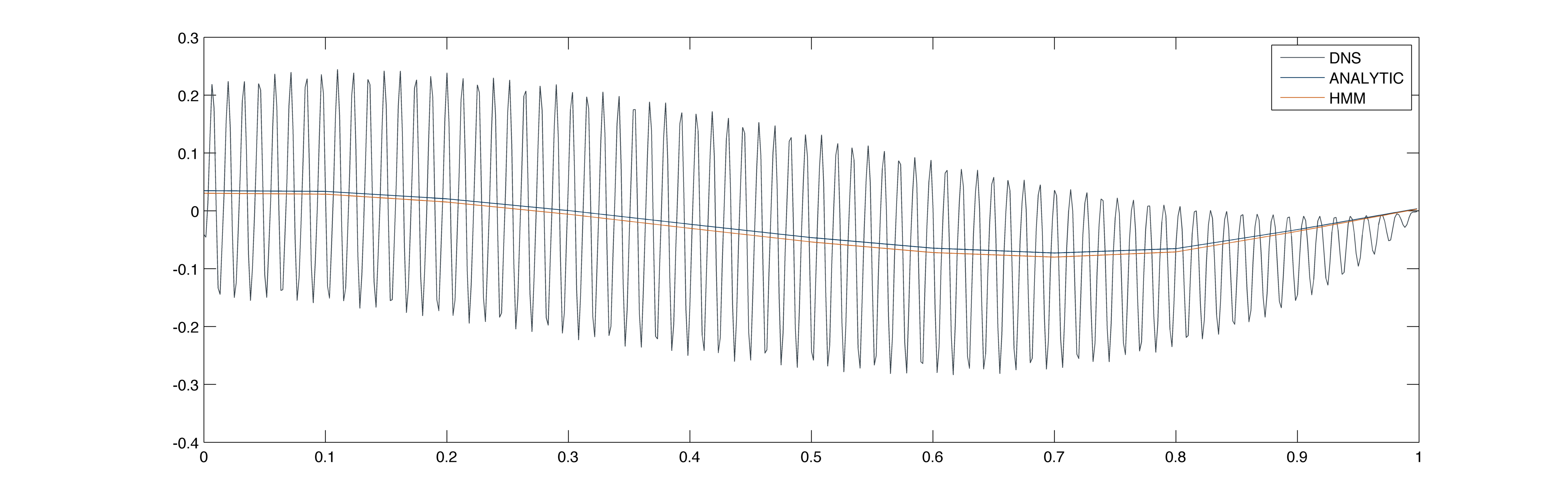}}
   \end{center} 

\end{figure}

In all of the simulations, a standard $\mc{P}^1$ finite element method is used on a regular triangulation of the domain. The \textsc{MATLAB} routine \texttt{lsqnonlin} is used to minimize the least-squares functional \refeq{Jm}.  The synthetic data  is generated using direct numerical simulation  of the full model using a fine mesh with resolution $h<\epsilon$. Macroscopic predictions of the forward model are computed using a coarse mesh with resolution $H>\epsilon$. The local subdomains in the HMM solver are discretized on a fine mesh with spacing $\delta<\epsilon$. This provides a framework for microscale inversion that avoids the major pitfalls of committing an ``inverse crime''. Unless otherwise stated, we set  $\Omega=[0,1]\times[0,1]$, $\epsilon=1/80$, $H=1/10$, $\delta = 3\epsilon$, and $h=1/600$.

The microstructure models \ref{AMP}, \ref{VF}, and \ref{ANG}, respectively, are represented by the multiscale functions
\begin{align}
\aeps_{A}(m(x),x) &= 1.1 +m(x) \sin (2\pi x_{2}/\epsilon), \\
\aeps_{B}(m(x),x) &= .5+2\chi_{\{x_{2}<m\}}(x), \\
\aeps_{C}(m(x),x) &= 1.1 + \sin (2\pi \tilde{x}_{2}/\epsilon), \qquad \tilde{x}=\sigma_{m}x.
\end{align}

\subsection{Inverse conductivity \text{(b = 0)}}

Based on the results in section \ref{sec:Jm}, if $\aeff$ is the homogenized coefficient corresponding to $\aeps$, then $\Lambda_{\aeps}\sim \Lambda_{A}$. We define the measurement operator $\mathscr{G}$ in terms of weak solutions of  $\nabla \cdot (a\nabla u_{k}) = 0$, subject to Dirichlet boundary conditions $u_{k}\vert_{\partial\Omega}=g_{k}$, $1\leq k\leq K$, \[\mathscr{G}(a, u_{k})_{j} = l_{j}(a \nabla u_{k} \cdot \hat{n}), \qquad j=1, \hdots, n.\] Here the linear functionals $l_{j}(f) = f(x_{j})$ are defined for a given set $\{x_{j}\}_{j=1}^{n} \subset \partial \Omega$ and $\{g_{k}\}$ is the set of functions $\{x, y, x^{2}, y^{2}, xy\}$ (see Figure \ref{fig:data}). Coarse meshes can be used to resolve solutions with these boundary conditions.

Table \ref{table:inverr} shows the relative error $|\hat{m} - m|/| m|$ in the estimation of the microscale parameter $m\equiv\theta\in \RR$ and Table \ref{table:invtime} contains a comparison of the performance time using different forward solvers. The differences in the inversion results can be attributed to the resolution of the meshes used, errors introduced by the optimization routine, and the mismatch in scales between the oscillatory data and the slowly varying predictions. 

\begin{table}[h!]
\caption{Relative error in inversion for a microscale parameter $m\equiv\theta\in \RR$.}\label{table:inverr}
\begin{center}
\begin{tabular}{|c|c|c|c|}
  \hline
 &  HMM &  Analytic  & Two-stage \\
   \hline
 Model A &0.04563540  &0.02556686 	 	& 0.02344292 \\
 Model B &0.03623084 	& 0.02234864 	 &0.06537146 \\
 Model C &0.05210436  &0.00726006 	 	 &0.05578068\\
 Model D &0.05093572 &0.00354686 	  	 &0.15121385 \\
 Model E &0.07446168 &0.05607627 	 	 &0.01708140\\
   \hline
\end{tabular}
\end{center}
\end{table}%
\begin{table}[h!]
\caption{Performance time (in seconds) of inversion for a microscale parameter $m\equiv\theta\in \RR$.}\label{table:invtime}
\begin{center}
\begin{tabular}{|c|c|c|c|}%
  \hline
 &  HMM &  Analytic & Two-stage\\
   \hline
 Model A &9.42& 8.86 &  22.13  \\
 Model B &18.30&16.60 &  27.45 \\
 Model C &14.90& 17.38 &  16.47  \\
 Model D &10.27& 10.09 &  17.15  \\
 Model E &15.88& 16.36 &  19.64 \\
   \hline
\end{tabular}
\end{center}
\end{table}%

The analytic and HMM solver perform similarly for all three microstructure models. The longer performance time using the two-stage solver can be attributed to first stage, where inversion for the unknown matrix coefficient $\aeff$ involves three times as many unknowns as direct inversion.

\subsubsection{Representation of the microscale parameter}

\begin{figure}[t]
\includegraphics[width=\linewidth,height=.2\linewidth]{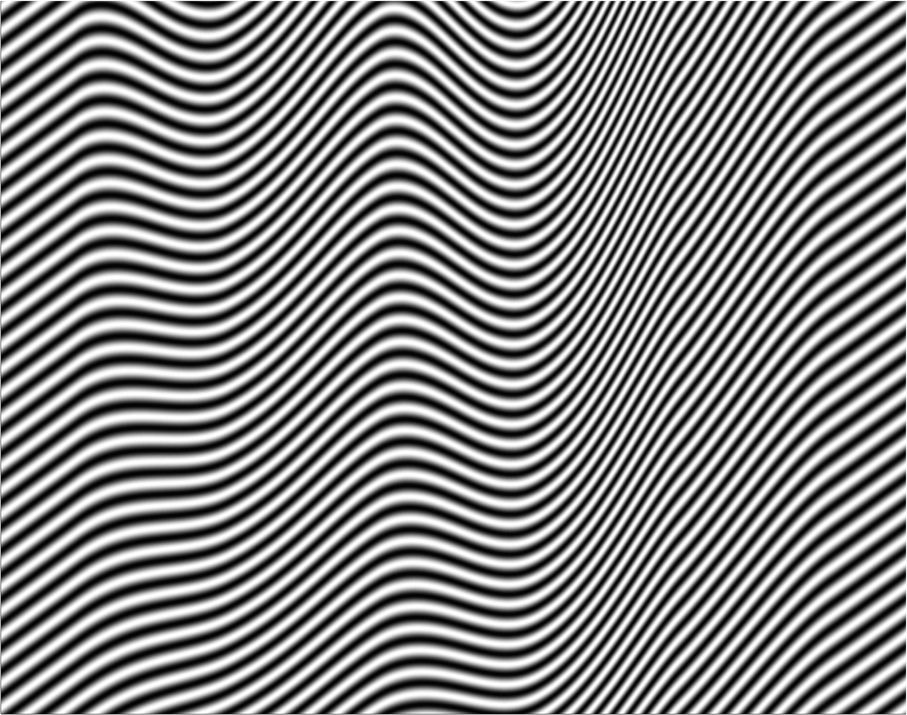}
\caption{  Model \ref{ANG} parametrized using a continuous function $m(x)$ with $N=6$ degrees of freedom.}
  \label{fig:mcontinuous}
\end{figure}
In one experiment, we restrict $m(x)$ to the space of cubic spline interpolants corresponding to the values given by the vector $\theta\in \RR^{N}$. An example of a microstructure with this kind of parametrization is shown in Figure \ref{fig:mcontinuous}.  Forward predictions are made using HMM ``on the fly''. Table \ref{table:mcontinuous} contains the relative errors $\|m-\hat{m}\|/\|m\|$ in the recovered parameter for different values of $N$. 
\begin{figure}
\includegraphics[width=\linewidth,height=.2\linewidth]{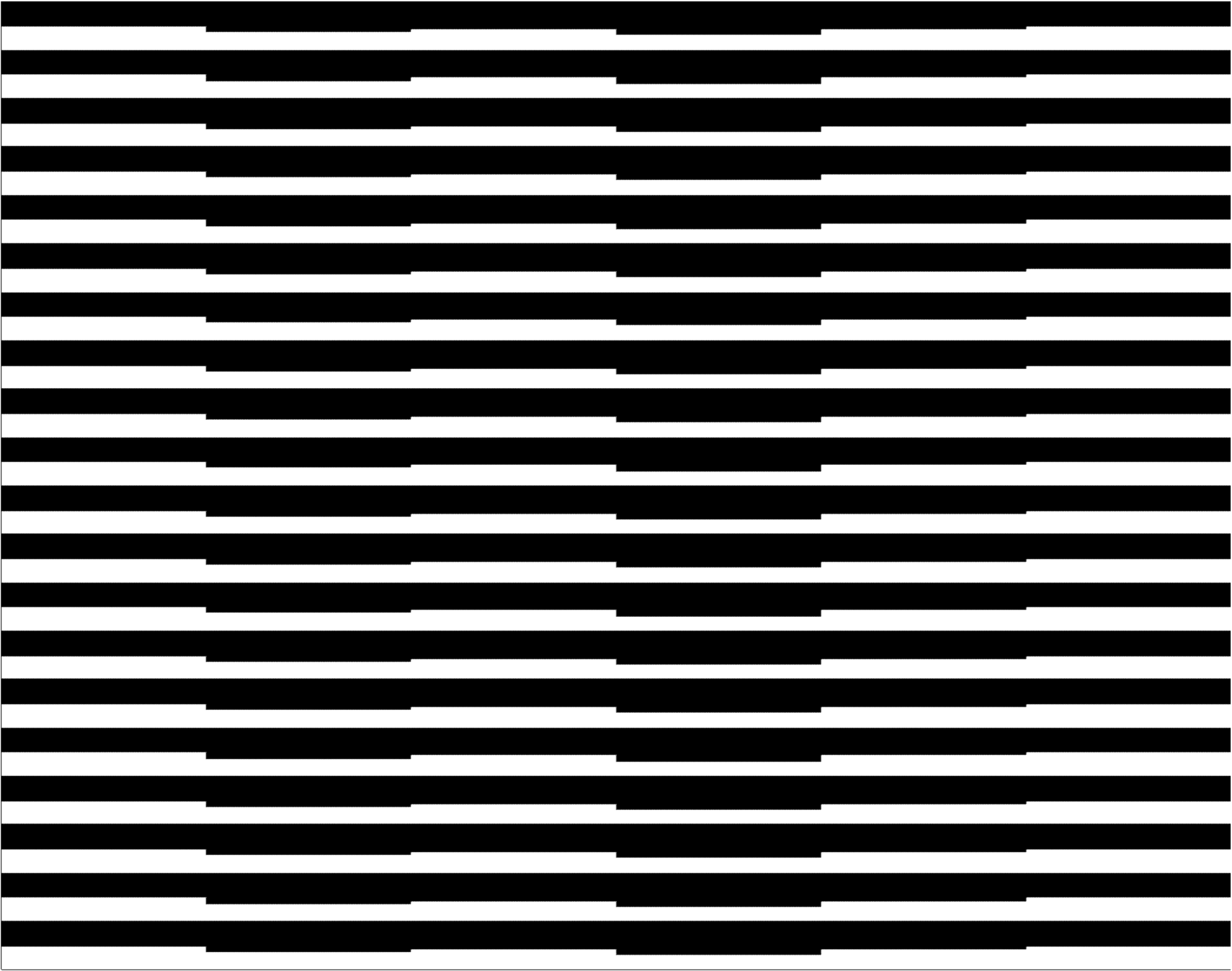}
\caption{  Model \ref{VF} parametrized using a piecewise constant function $m(x)$ with $N=6$.}
  \label{fig:mdiscontinuous}
\end{figure}
\begin{table}[h!]
\caption{Inversion error in $\theta\in \RR^N$ for continuous $m(x)$.}\label{table:mcontinuous}
\begin{center}
\begin{tabular}{|c|c|c|c|}
\hline
$N$ & \ref{AMP} - Amplitude & \ref{VF} - Volume Fraction & \ref{ANG} - Angle\\
\hline
1		&  0.04237400 & 0.04507309  &  0.05701372 	 \\
2		& 0.05485636  & 0.04873258 &	 0.04293228 	\\
3		& 0.05552983  & 0.06892129 &	 0.06720150	 \\
4		& 0.06572240  & 0.05887249 &  0.05945569 	\\
5		& 0.06691606  & 0.07173517 &	 0.08507094  	\\
6		& 0.06761214  & 0.07921053  &	 0.09011505  	\\ 

\hline
\end{tabular}
\end{center}
\end{table}%
\begin{table}[h!]
\caption{Inversion error in $\theta\in \RR^N$ for piecewise constant $m(x)$.}\label{table:mdiscontinuous}
\begin{center}
\begin{tabular}{|c|c|c|c|}
\hline
$N$ & \ref{AMP} - Amplitude & \ref{VF} - Volume Fraction & \ref{ANG} - Angle\\
\hline
1		& 	0.04563540 	&   	0.02234864 &	0.05210436 	  \\
2		&	0.05786556 &	 0.02244457 &  	0.05886389 	  \\
3		&	0.06187033 &	 0.04806429 &	0.06601630 	  \\
4		&	0.07288481	&	0.07027316 &	0.08523131 	 \\ 
5		& 	0.06697655	&	0.09667535 	  &	0.08324536 	\\  
6	 	& 	0.08680081	&	0.07828320 &	0.08282462 	\\  
\hline
\end{tabular}
\end{center}
\end{table}%
Another parameter space is the set of piecewise constant functions $m(x)$ (see Figure \ref{fig:mdiscontinuous}). Forward computations are made using a HMM solver that efficiently makes use of precomputed values of ${A}(m)$. Table \ref{table:mdiscontinuous} contains the errors in the recovered parameter for different values of $N$.

We can extend the ideas in previous sections to unknown parameters $m$ of the form, \[m(x)=(m_{1}(x), \hdots, m_{M}(x)),\] where $M$ is the number of microscale features to be recovered. For the general problem, each function $m_{i}(x)$, $1\leq i \leq M$, is assumed to be a scalar function with $N$ degrees of freedom (see Figure \ref{fig:aepsMN}).

Oscillatory functions describing
models Amplitude-Angle (\ref{AMP}-\ref{ANG}), Volume Fraction-Angle
(\ref{AMP}-\ref{ANG}) and Amplitude-Volume Fraction (\ref{AMP}-\ref{VF}), respectively, are

\begin{align}
\aeps_{AC}(m_{1},m_{2}, x) &= \aeps_{A}(m_{1}, \tilde{x}), \qquad \tilde{x}=\sigma_{m_{2}}x,\\
\aeps_{BC}(m_{1},m_{2}, x) &= .5+2\chi_{\{(0,m_{1})\}}(\tilde{x}_{2}), \qquad \tilde{x}=\sigma_{m_{2}}x,\\
\aeps_{AB}(m_{1},m_{2}, x) &= .5+2m_{1}\chi_{\{x_{2}<m_{2}\}}(x).\end{align}

 \begin{figure}
\begin{center}
 \includegraphics[width=.3\linewidth]{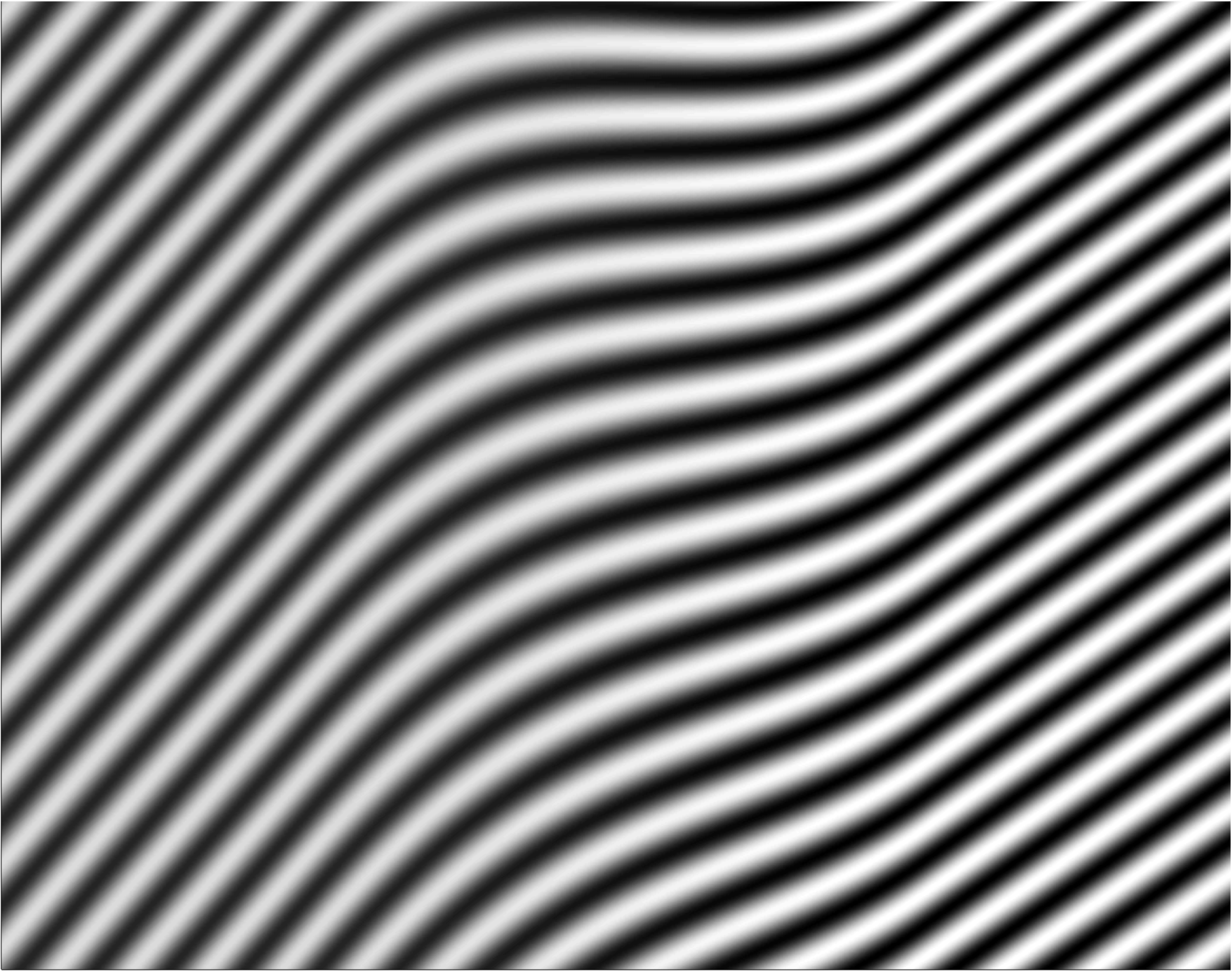}\hfill \includegraphics[width=.3\linewidth]{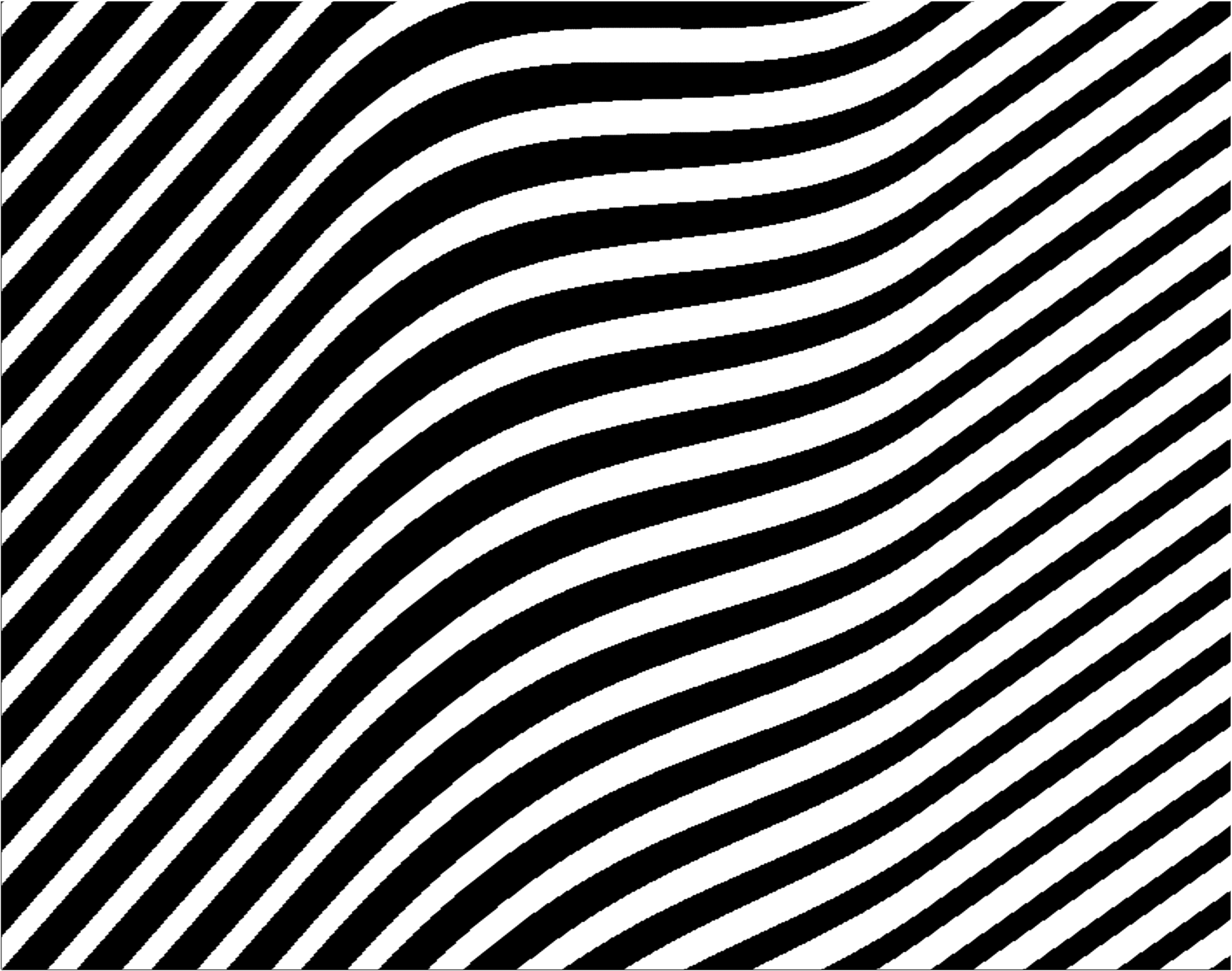} \hfill\includegraphics[width=.3\linewidth]{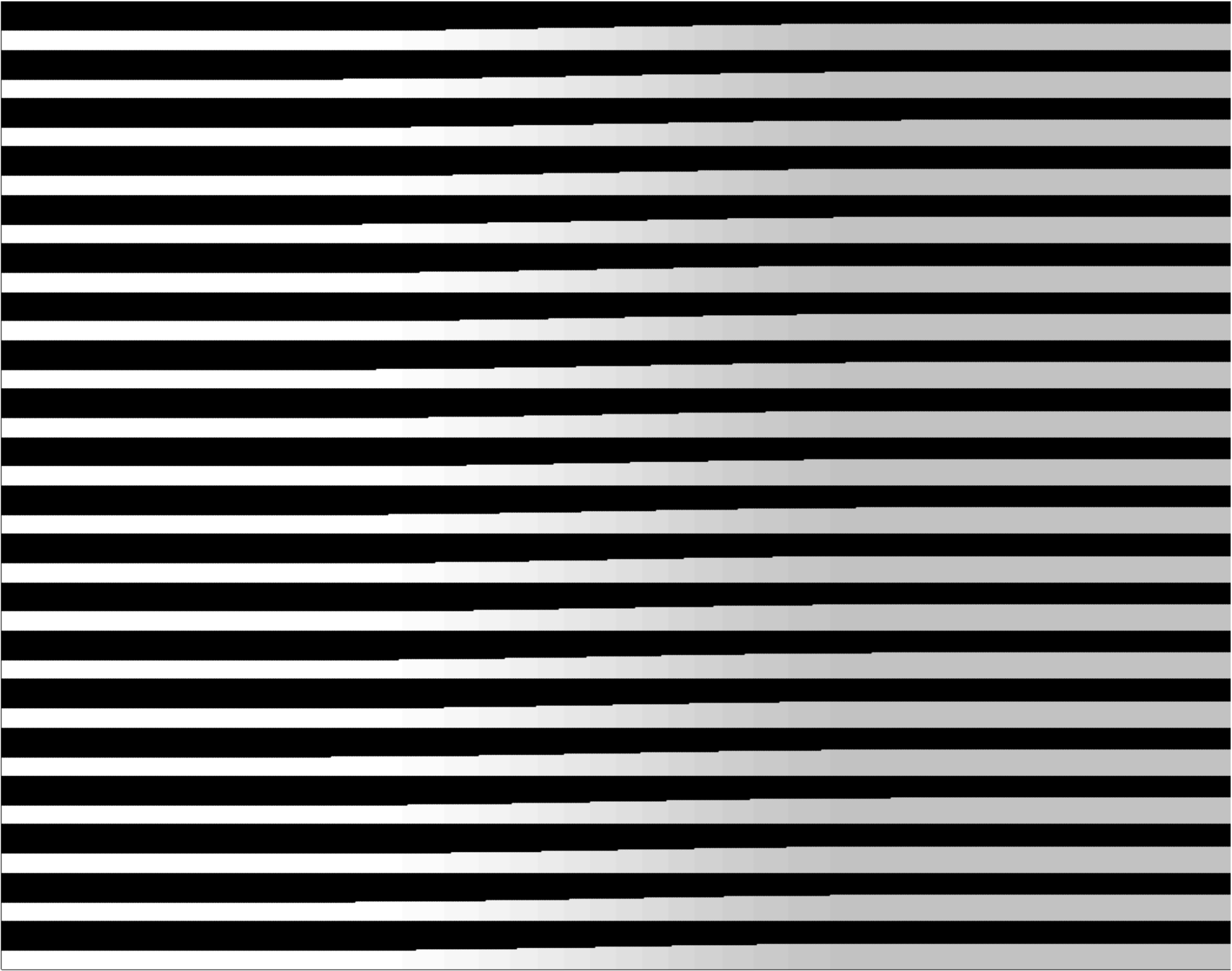} 
\caption{Microstructure models for continuous vector fields $m(x) = (m_{1}(x), m_{2}(x))$. Left to right: Amplitude-Angle,  Volume Fraction-Angle, Amplitude-Volume Fraction.}\label{fig:aepsMN}
\end{center}
\end{figure}

Therefore, the inverse problem reduces to determining a finite dimensional vector of unknowns, $\theta\in \RR^{MN}$. Table \ref{table:mcontinuous2M} contains the errors from the numerical experiments for $M=2$.

\begin{table}[h]
\caption{Inversion error in $\theta\in \RR^{2N}$ for continuous $m(x) = (m_{1}(x), m_{2}(x))$.}\label{table:mcontinuous2M}
\begin{center}
\begin{tabular}{|c|c|c|c|}
\hline
$N$ & Amplitude-Angle & Volume Fraction-Angle& Amplitude-Volume Fraction\\
\hline
1		&  0.03082785 & 0.04804962   &   0.04394855 	 \\
2		& 0.12689231  & 0.07667133 &	 0.06829348  	\\
3		& 0.11401958  & 0.09285163 &	 0.07980421 \\
\hline
\end{tabular}
\end{center}
\label{table:aepsMN}
\end{table}%

In certain cases, the solutions to cell problems corresponding to different multiscale coefficients $\aeps$ are indistinguishable. In particular, for a fixed $m$, there exists a $\tilde{m}$ such that the homogenized coefficients corresponding to microstructures with a parametrized volume fraction $m\rightarrow\aeps_{VF}(m)$ are equal to the homogenized coefficients corresponding to microstructures with a parametrized amplitude $\tilde{m}\rightarrow\aeps_{AMP}(\tilde{m})$. However, if the recovered parameter is constrained to a convex subset of the search space, the problem can be reformulated to guarantee a unique recovery.


\subsubsection{Random microstructure}
We consider a model of layered media where the microstructure is represented by a random function $m(x)\rightarrow \aeps(m(x),x,\omega)$, where
\begin{align}
 \aeps(m,x, \omega)&=a(m,  X_\epsilon(x, \omega)), \labeleq{aepsomega}\\
 X_\epsilon(x, \omega)&=\sum_{j=0}^{\lfloor1/\epsilon\rfloor}X_j(\omega) \chi_{\left[j\epsilon, (j+1)\epsilon\right)}(\tilde{x}), \qquad \tilde{x}=(\sigma_{\pi/4}x)_{2},
\end{align}
where  $\omega$ is an element of a sample space $\mathcal{X}$ and $X_j$ are independent, random variables that are uniformly distributed on the interval $[-1, 1]$. Figure \ref{fig:aepslayersrand} shows a plot of \refeq{aepsomega} for  $a(m, \xi) = 1 +m\xi$.

In these experiments, we fix $m\equiv \theta\in (0,1)$ and minimization of the least-squares functional \refeq{Jm} is performed for 100 realizations of $\aeps(\theta, \omega)$. In each trial, the same realization is used to generate both the data, $\data = \obs(\aeps, \ueps_{k})_{j} = l_{j}(\aeps\nabla \ueps_{k} \cdot \hat{n})$ and the predictions $z = \obs(\aeff, U_{k})$. 

We compare the performance of HMM forward solvers corresponding to three different choices of the size of the local subdomains $I_{\delta}$; $\delta=2\epsilon$, $\delta=4\epsilon$, and $\delta=8\epsilon$. Table \ref{table:inverrrand} contains the frequency of recovered parameters $\hat{\theta}$ that lie in the interval $E_{\theta}$ centered at the true parameter. The results are consistent with the expectation that the accuracy of the parameter estimation using HMM would improve with increased cell size.

\begin{figure}
\begin{center}
\includegraphics[height=.3\linewidth, width=\linewidth]{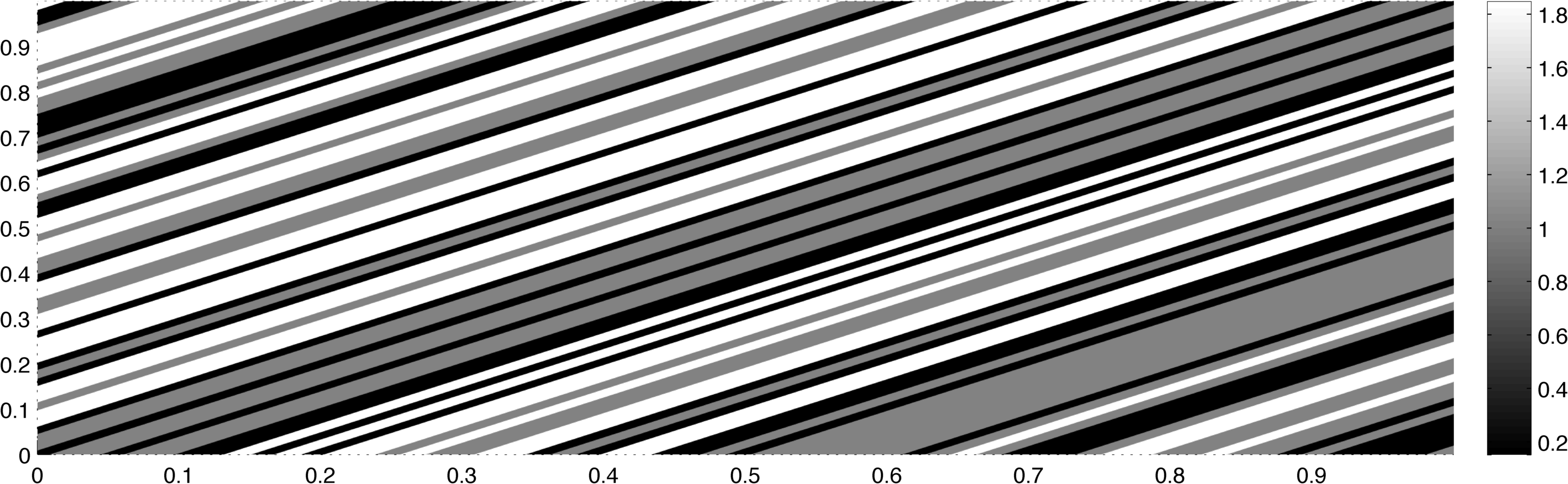}
\caption{{Random microstructure in layered materials.}}  \label{fig:aepslayersrand}
\end{center}

\end{figure}

\begin{table}[h]
\caption{Microscale parameter inversion for a random microstructure. The true parameter is $\theta=.8$, and the frequency of recovered parameters $\hat{\theta}$ lying in the interval $E_{\theta}$ is given.}
\begin{center}
\begin{tabular}{|c|c|c|c|}
   \hline
 $E_{\theta}$ &  $\delta = 2\epsilon$ &  $\delta = 4\epsilon$& $\delta = 8\epsilon$\\
   \hline
 $(.7, .9)$& $16\%$ & $26\%$ & $57\%$\\
 $(.75, .85)$& $4\%$ & $14\%$ & $29\%$\\
  $(.79, .81)$& $0\%$ & $2\%$ & $11\%$\\
     \hline
\end{tabular}
\end{center}
\label{table:inverrrand}
\end{table}%

\subsubsection{Noisy data}

Here, measurement error is introduced in the observations, 
$\data = \obs(\aeps, \ueps_{k})_{j} = l_{j}(\aeps\nabla \ueps_{k} \cdot \hat{n}) (1 +\xi)$,
where $\xi$ is a normally distributed random variable with mean zero and standard deviation $\sigma =.1$.  Figure \ref{fig:noisydata} contains histograms of the relative errors in the recovered parameter $m\equiv\theta \in \RR$. From the experiments it is clear that the two-stage procedure  resulted in errors with a larger variance than the errors from direct inversion. Modifications of this procedure will be needed in order to improve robustness to noisy input data.

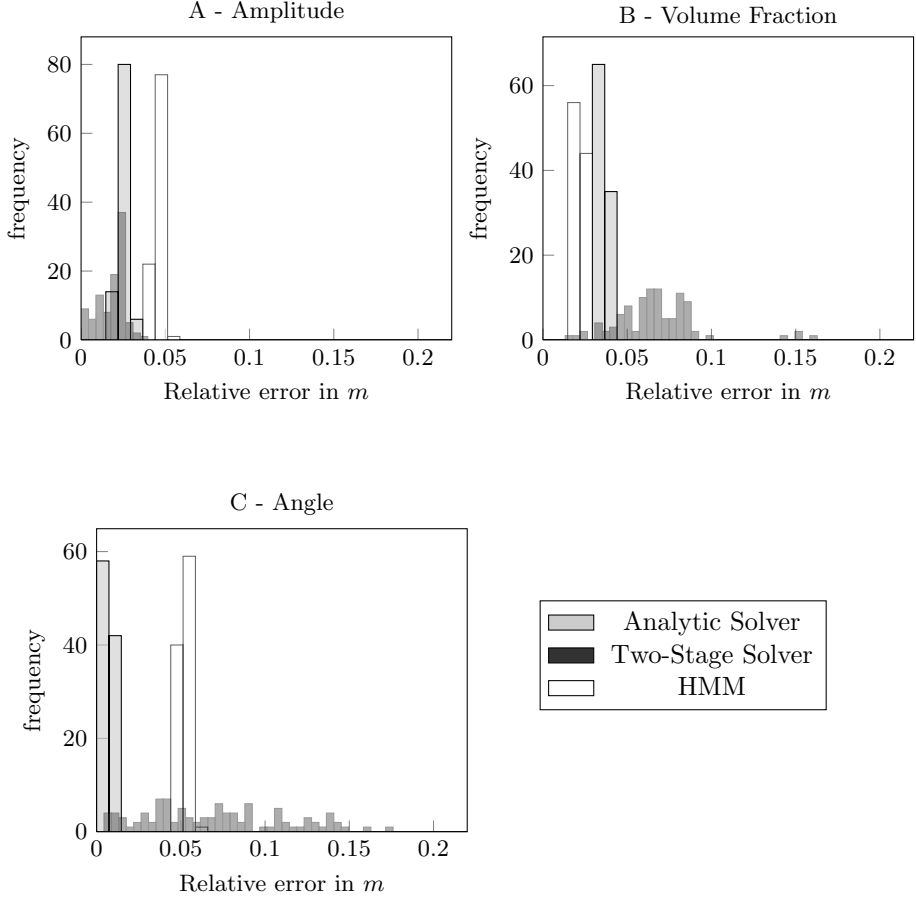
\begin{figure}
\caption{{\bf Microscale inversion with noisy observations (N=M=1)}. The histogram shows the results of 100 trials of microscale inversion with measurement error of $10\%$ added to the synthetic data.}\label{fig:noisydata}
\begin{center}
\invplothist{\amplitudeinvnoise}{A - Amplitude}\invplothist{\areafractioninvnoise}{B - Volume Fraction}\invplothist{\angleinvnoise}{C - Angle}\parbox[b][.5\linewidth][c]{.44\linewidth}{
\begin{center}
\begin{tikzpicture}
\begin{customlegend}[legend columns=1,legend style={draw,column sep=1ex}, legend entries={Analytic Solver,Two-Stage Solver,HMM}]
    \addlegendimage{area legend,fill=black!20}
    \addlegendimage{area legend,fill=black!80}
    \addlegendimage{area legend, fill=white}
    \end{customlegend}
\end{tikzpicture} \end{center}}
  \end{center}
\end{figure}


\subsection{Medical imaging ($\beps>0$)}\label{sec:qpat}
We will consider a medical imaging technique that uses a combination of optical and ultrasonic waves to determine properties of a medium from surface measurements. In quantitative Photoacoustic Tomography, (qPAT), optical coefficients are reconstructed from knowledge of the absorbed radiation map \cite{Bal2011, Bal2012}. 

Let $\Omega\subset\RR^{2}$ represent a medium of interest and $\Lambda\subset \RR_{+}$ a set of wavelengths included in the experiment. The density of photons at wavelength $\lambda$, denoted by $u(x,\lambda)$, solves the second-order elliptic equation
\begin{align}
\begin{cases} -\nabla \cdot \(a(x, \lambda)\nabla u(x,\lambda)\) + \sigma(x, \lambda)u(x,\lambda)= 0 & x\in \Omega \\
u(x,\lambda) = g(x,\lambda) & x\in \partial \Omega.
\end{cases}\labeleq{qpat}
\end{align}

Here, $a$ and $\sigma$ are diffusion and absorption coefficients  that are dependent on the wavelength $\lambda$. The ultrasound generated by the absorbed radiation is quantified by the Gr\"uneisen coefficient, $\Gamma(x)$. The objective of qPAT is to recover $(a, \sigma, \Gamma)$ using the measured data from photoacoustic experiments corresponding to an illumination pattern $g(x,\lambda)$.

We will modify the numerical examples from \cite{Bal2012} by considering the forward model \refeq{qpat} with diffusion coefficients that have variations on multiple spatial scales, $a=\aeps$. For simplicity, we will assume that the absorption and diffusion coefficients can be expressed as
\begin{align*}
\sigma(x,\lambda) =\sum_{i=1}^{2}\beta_i(\lambda)\sigma_{i}(x),\qquad \aeps(x,\lambda)=\alpha(\lambda)\aeps(x).
\end{align*}
The measured data takes the form
\begin{align}
y = \obs(a, u(\cdot, \lambda_k))_{j} = \Gamma(x_j) \sigma(x_j,\lambda_k) u(x_j, \lambda_k), \labeleq{qpatdata}
\end{align}
where the set $\{x_j\}$ consists of points in the interior of the domain $\Omega$.

\begin{figure}
\begin{center}
\includegraphics[width=.45\linewidth]{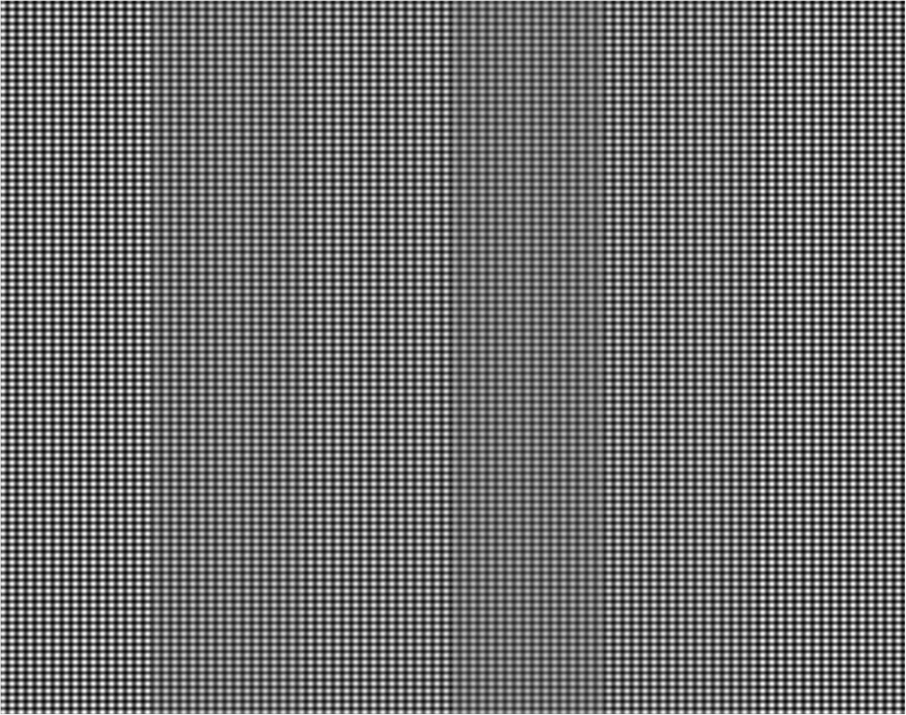} \hfill \includegraphics[width=.45\linewidth]{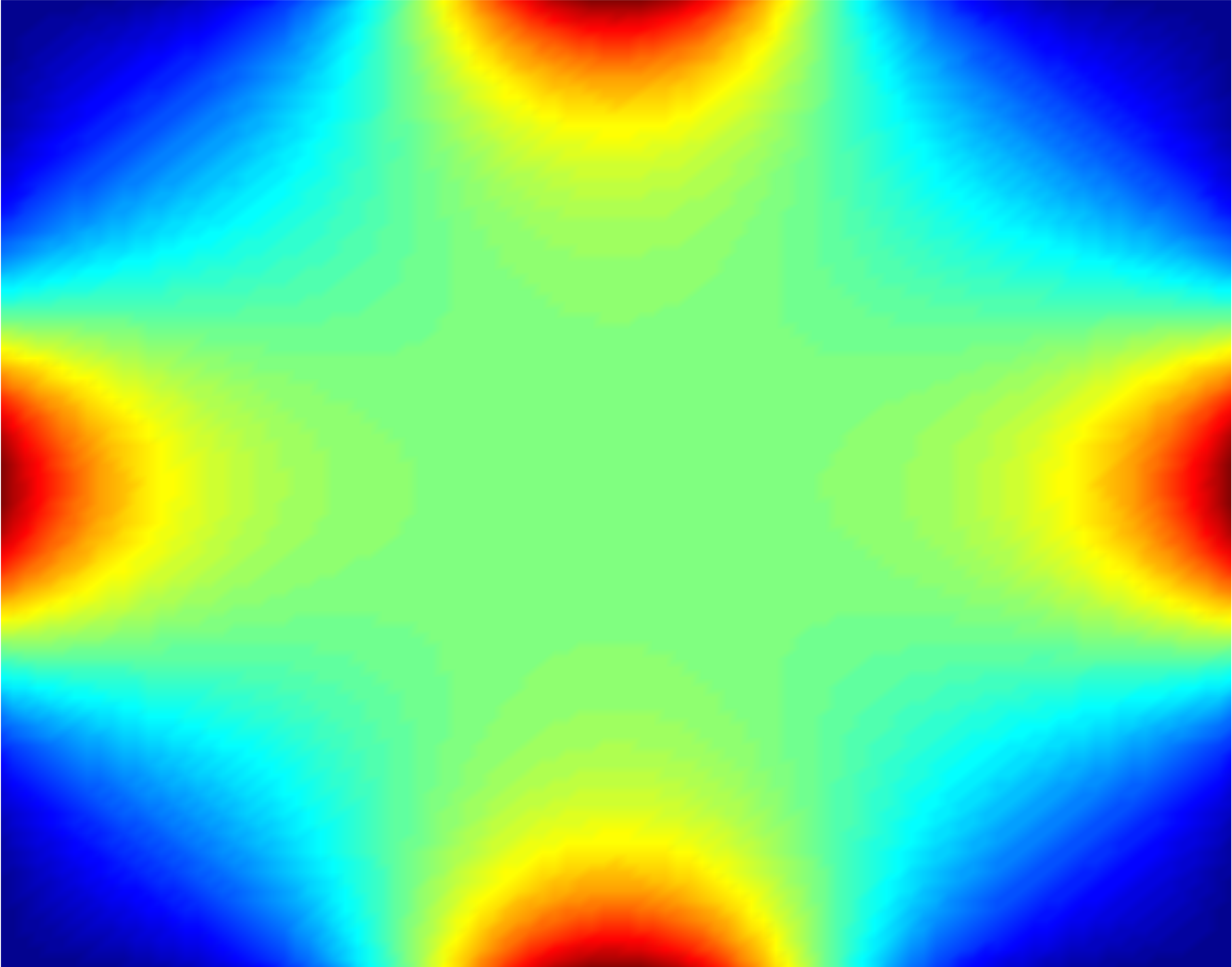}
\end{center}
\caption{Left: Multiscale coefficient $\aeps_{D}$. Right: solution to qPAT model \refeq{qpat} corresponding to four illuminations on the boundary.}
\end{figure}
In the numerical experiments, the measured data \refeq{qpatdata} involves the solutions to \refeq{qpat} for each wavelength in the set $\Lambda =\{.2, .3, .4\}$. Four illuminations are used for each wavelength.  The wavelength dependent components of the coefficients are set to be
\begin{align*}
\beta_{1}(\lambda) = \frac{\lambda}{\lambda_{0}},\quad \beta_{2}(\lambda) = \frac{\lambda_{0}}{\lambda}, \quad \alpha(\lambda)= \({\lambda}/{\lambda_{0}}\)^{3/2},
\end{align*}
where the wavelength $\lambda_{0}=.3$ normalizes the amplitude of the coefficients. The spatial components of the coefficients are given as
\begin{align*}
\Gamma(x)&=.8 + .4\tanh(4x-4),\\
\sigma_{1}(x)&= .2-.1e^{-2\pi |x-x_{0}|^{2}}, \qquad &\sigma_{2}(x)= .2+.1e^{-2\pi |x-x_{0}|^{2}}, \qquad x_{0}=(.5,.5).
\end{align*}
Figure \ref{fig:qpatcoeff} contains plots of the spatial components of  $\Gamma$ and $\sigma$.

\begin{figure}
{\includegraphics[width=.33\linewidth]{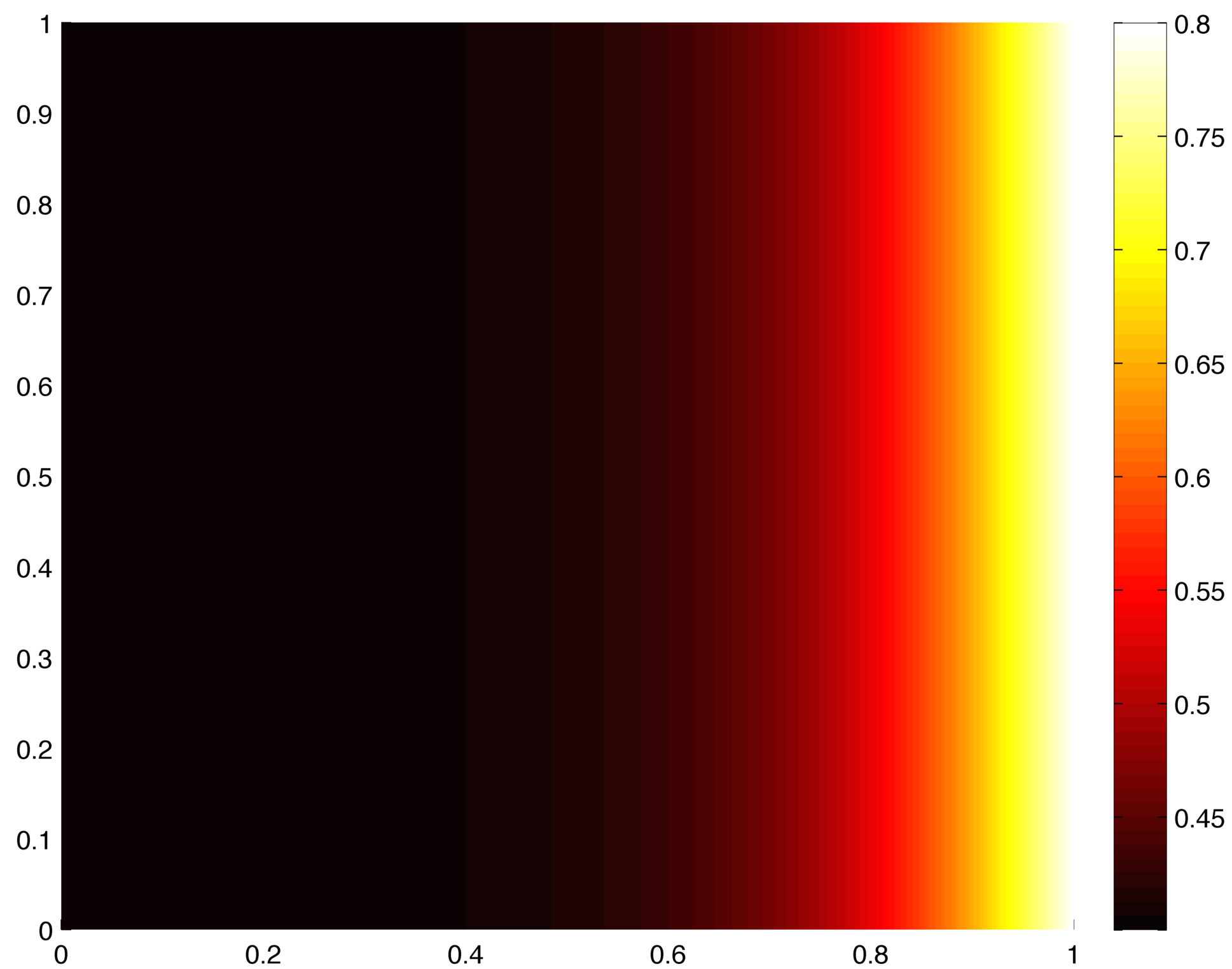}}\hfill
{\includegraphics[width=.33\linewidth]{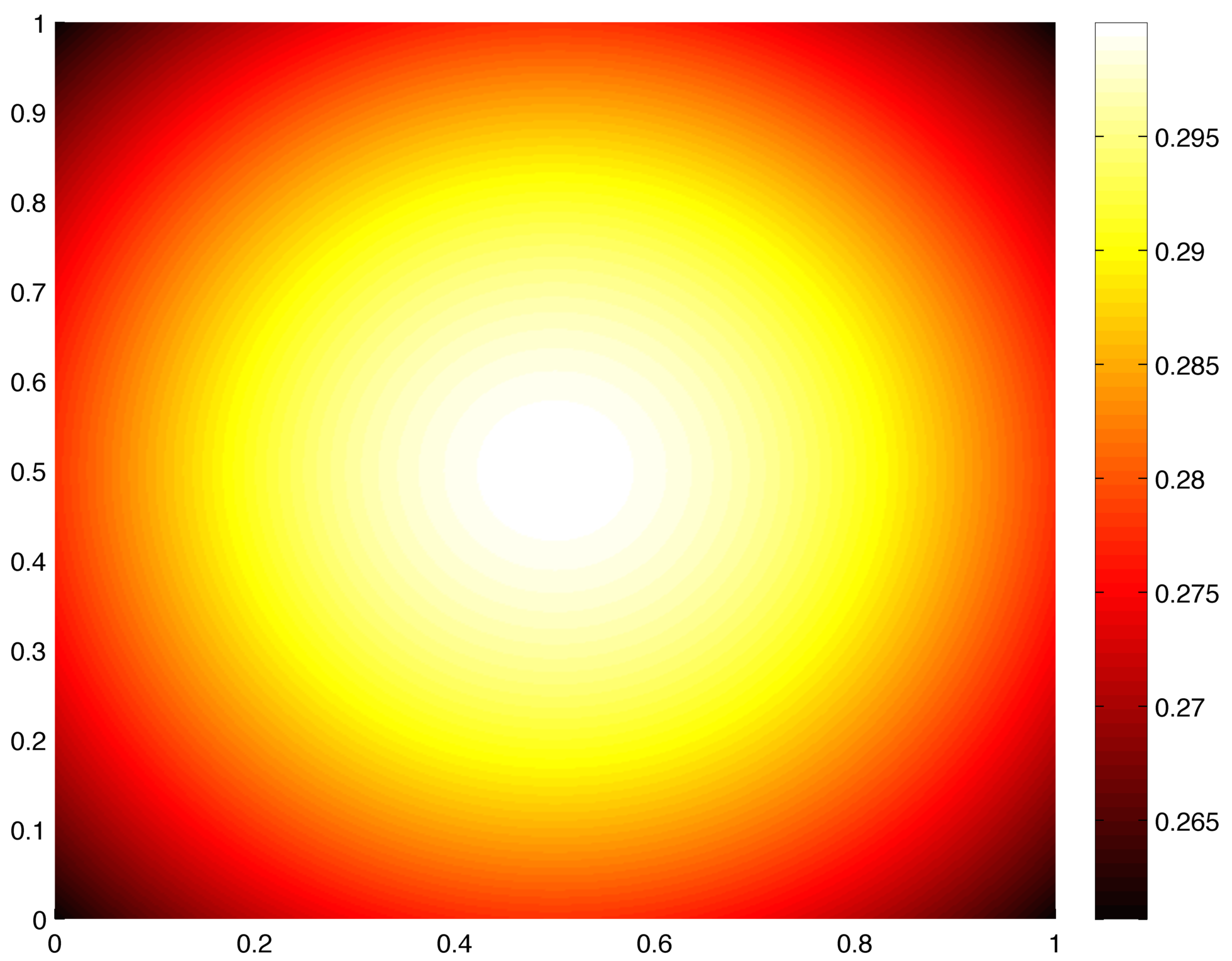}}\hfill {\includegraphics[width=.33\linewidth]{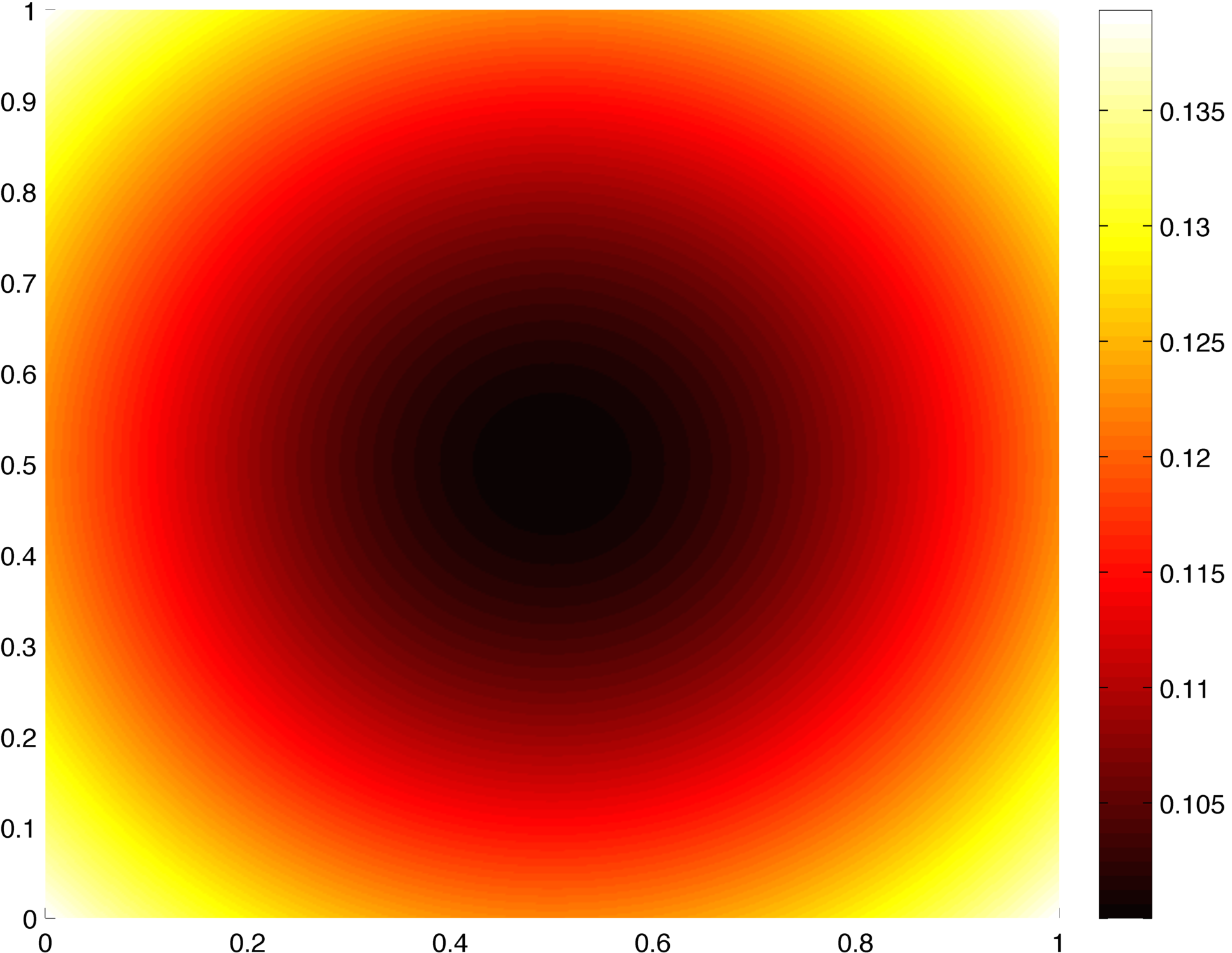}}
\caption{ {\bf Spatial components of the qPAT model.} Left to right: Gr\"uneisen coefficient $\Gamma(x)$,  and absorption component functions $\sigma_{1}(x)$ and $\sigma_{2}(x)$.}
  \label{fig:qpatcoeff}
\end{figure}

The microstructure in models \ref{AMP2d} and \ref{VF2D}, respectively, are represented by the coefficients
\begin{align}
\aeps_{D}(m(x),x) &= \aeps_{A}(m(x_{1}),x_{1})\aeps_{A}(m(x_{2}),x_{2}), \\
\aeps_{E}(m(x),x) &= .5+2\chi_{\{[0, m]\times [0, m]\}}(x). \\
\end{align}

As in \cite{Bal2012}, the reconstruction errors are given for synthetic data with no noise added. The results in Table \ref{table:qpat} show inversion results using a HMM forward solver for macroscopic predictions. Here the parameters are chosen to be $\epsilon=1/100$, $H=1/20$, $\delta = 3\epsilon$, and $h=1/800$. The microscale parameter $m(x)$ is a piecewise constant function of the form \refeq{mpw}.
\begin{table}[h]
\caption{Inversion errors for parameter estimation in qPAT models containing periodic cell structures.}
\begin{center}
\begin{tabular}{|c|c|c|}
\hline
$N$ & \ref{AMP2d} - Amplitude & \ref{VF2D} - Volume Fraction \\
\hline
1		&  0.05605865 &   0.01529364 \\
2		& 0.07461357  &  0.01838472 	 \\
3		&  0.07146362 &   0.02986680 	 \\
4		&  0.08319615   &   0.03748564 	 \\
5		& 0.13636728 &   	0.04616734 \\
6		&  0.08914877 &   	0.03729032  \\
\hline
\end{tabular}
\end{center}
\label{table:qpat}
\end{table}%


\subsection{Seismic waveform inversion ($\beps<0$)}\label{sec:helm}

In exploration geophysics, scientists attempt to determine the geological properties of the Earth's crust that govern the propagation of acoustic waves (see \cite{Symes2009} for an overview). In full waveform inversion, the goal is to find a subsurface model that produces the best fit to reflection data recorded from seismic surveys. Each prediction is simulated using the physics of the experiment. This corresponds to an inverse problem for partial differential equations where the unknown coefficients represent properties of the sedimentary layers, e.g. velocities, porosity, and saturation. 

Full waveform inversion is the result of combining numerical methods for the simulation of wave propagation with optimization techniques to minimize the data misfit term (see \cite{Fichtner2013} for a discussion of multiscale full waveform inversion). Traditional finite element methods (FEM) or finite difference methods (FDM) for wave propagation in the high frequency regime come with a considerably high computational cost due to the highly oscillatory nature of the propagating waves  \cite{Ihlenburg1997}. 

The forward problem can be modeled in both the time domain and the frequency domain. In theory, both approaches are equivalent, however the choice of model can influence the design of specific numerical methods to optimize performance. An advantage of the frequency domain model is that a coarse discretization of the frequencies can be used to produce images that are free from aliasing \cite{Baeten2013,Mulder2008,Sirgue2004}. 

A major hurdle in full waveform inversion is the presence of local minima in the least-squares functional for the data misfit. In \cite{Plessix2006}, adjoint-state methods are used to efficiently calculate the gradient of the least-squares functional and speed up the optimization. We emphasize that in this work we use standard optimization routines in order to fully study the effects of fitting an effective model to the data.

Our numerical examples correspond to problems that mimic the models used in seismic waveform inversion. Here, the model parameters represent the spatially varying volume fraction, angle, and amplitude of the layers. The forward model $G$ maps $a$ to the solution to the 2D variable coefficient Helmholtz equation on the square $\Omega = [0,1]^{2}$, 
\begin{align}
\nabla \cdot (a(x)\nabla u )+\omega^{2}u(x) &= \delta(x-x_{s})\quad x \in {\Omega},\labeleq{helm}
\end{align}
where $a$ is the model parameter that characterizes the density of the medium, $\omega$ is the wave number, and $u$ is the spatially varying pressure field arising from a disturbance at a source located at $x_{s}\in \Omega$. We impose the absorbing boundary condition
\begin{align}
a \nabla u \cdot n  - ik u  = 0 \quad \text{on $\partial \Omega$},
\end{align}
where $k=a^{-1/2}\omega$.  The seismic data is represented as the collections of solutions measured on the sensor domain ${{D}}\subset\Omega$,  $\obs(a, u_k)_{j} = u_k(x_j)$, $\{x_j\}\subset{{D}}$ (see Figure \ref{fig:helmdns}).

\begin{figure}
{\includegraphics[width=.25\linewidth]{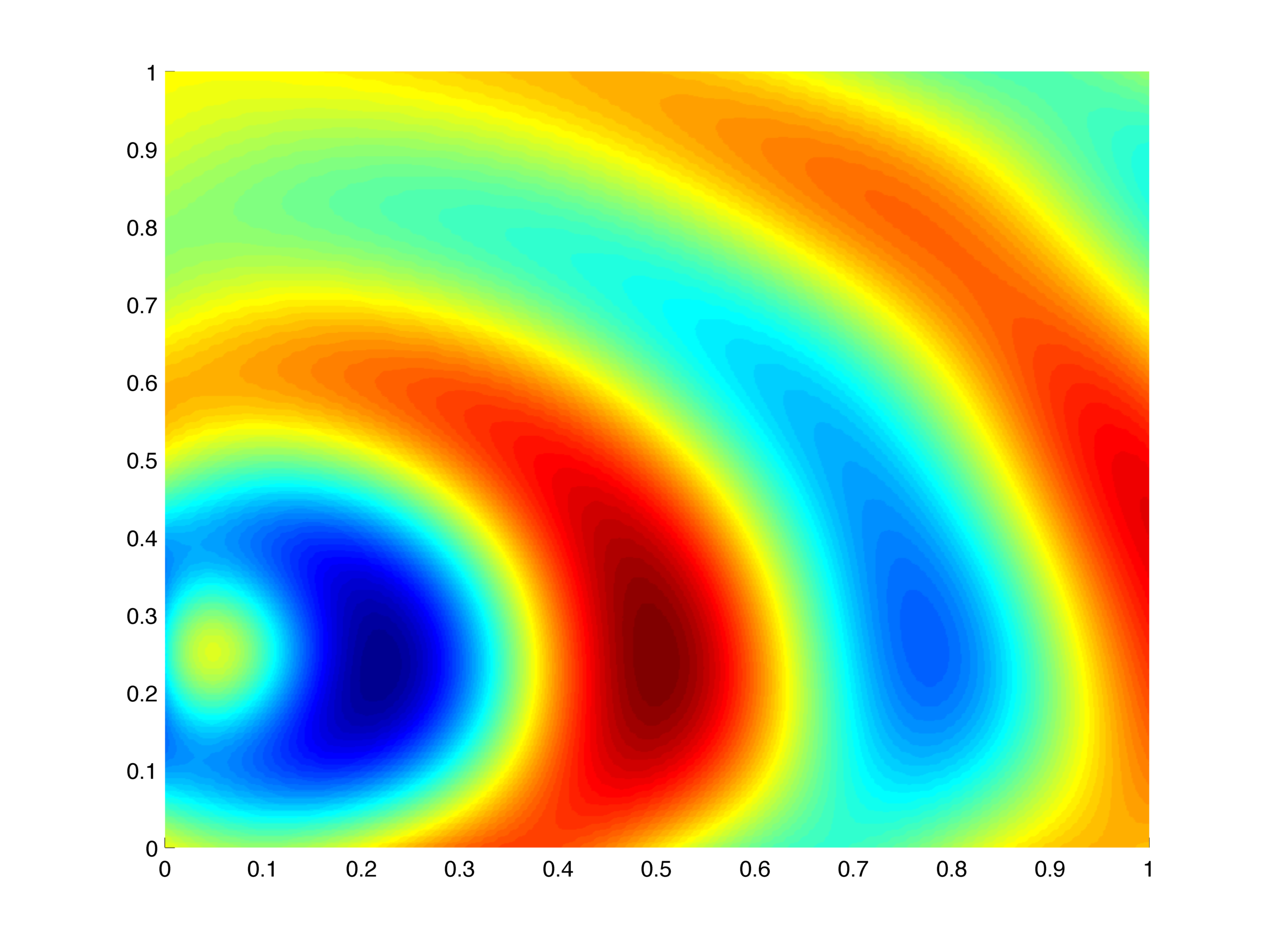}\hfill \includegraphics[width=.25\linewidth]{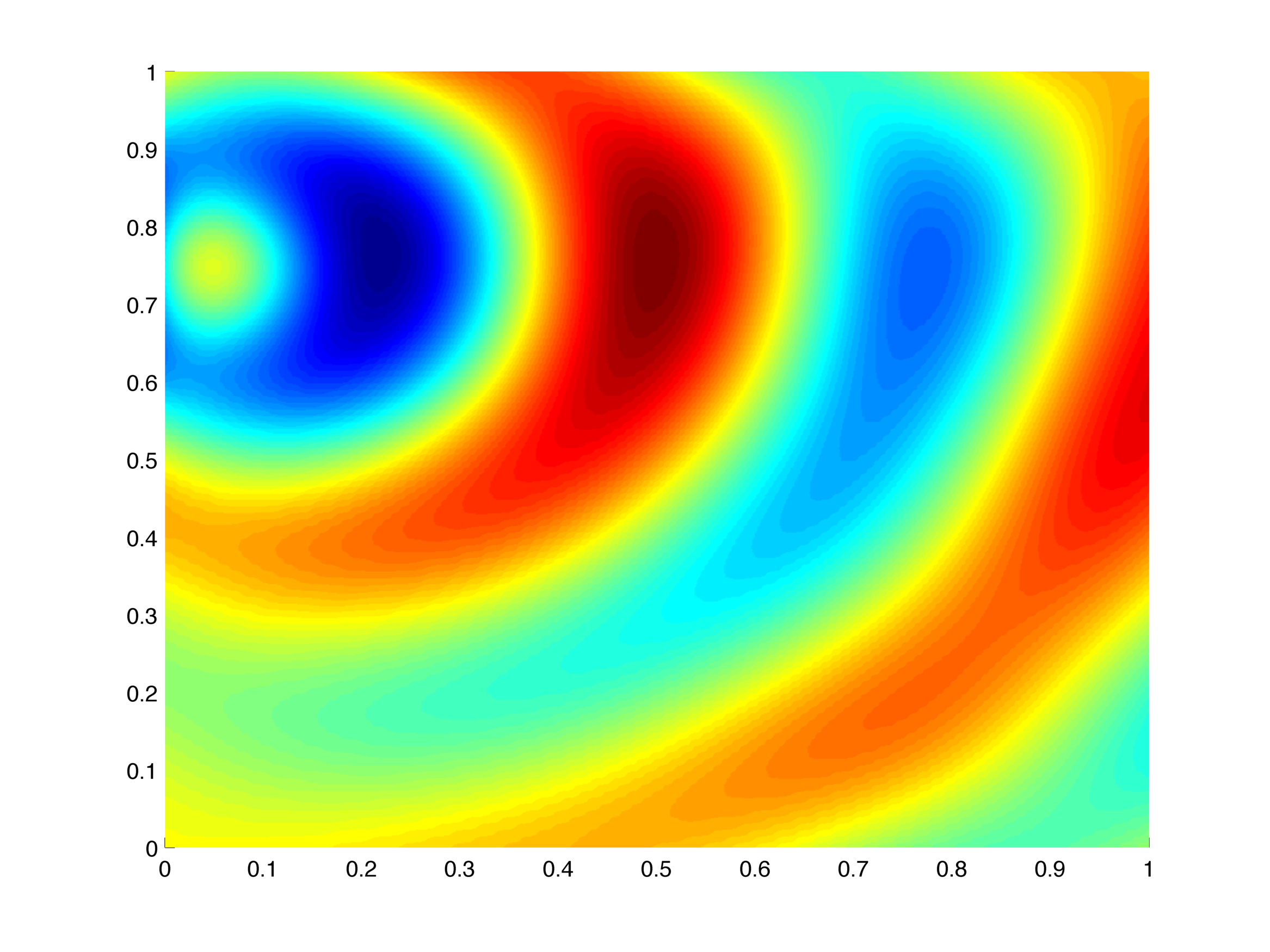}\hfill \includegraphics[width=.25\linewidth]{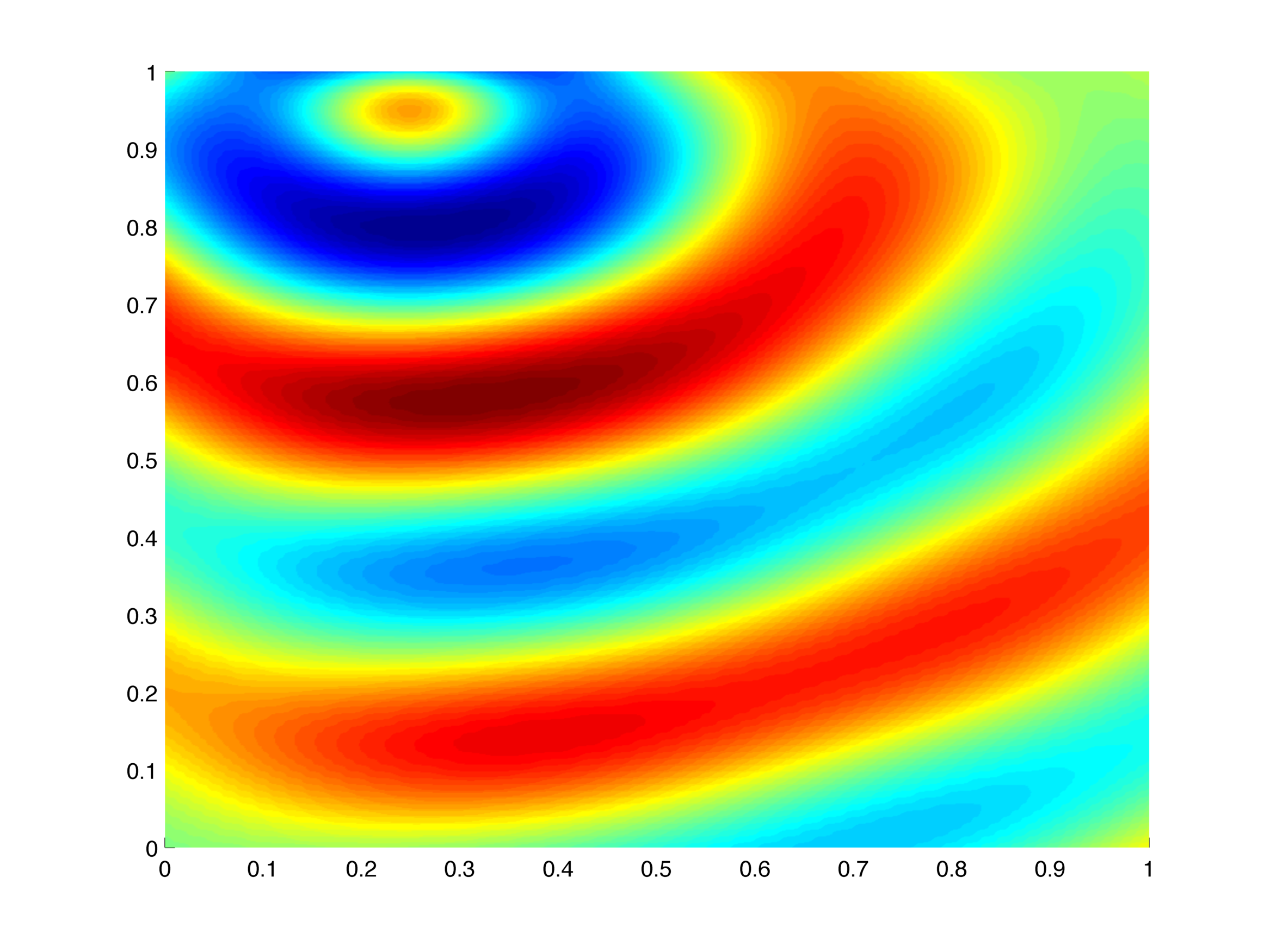}\includegraphics[width=.25\linewidth]{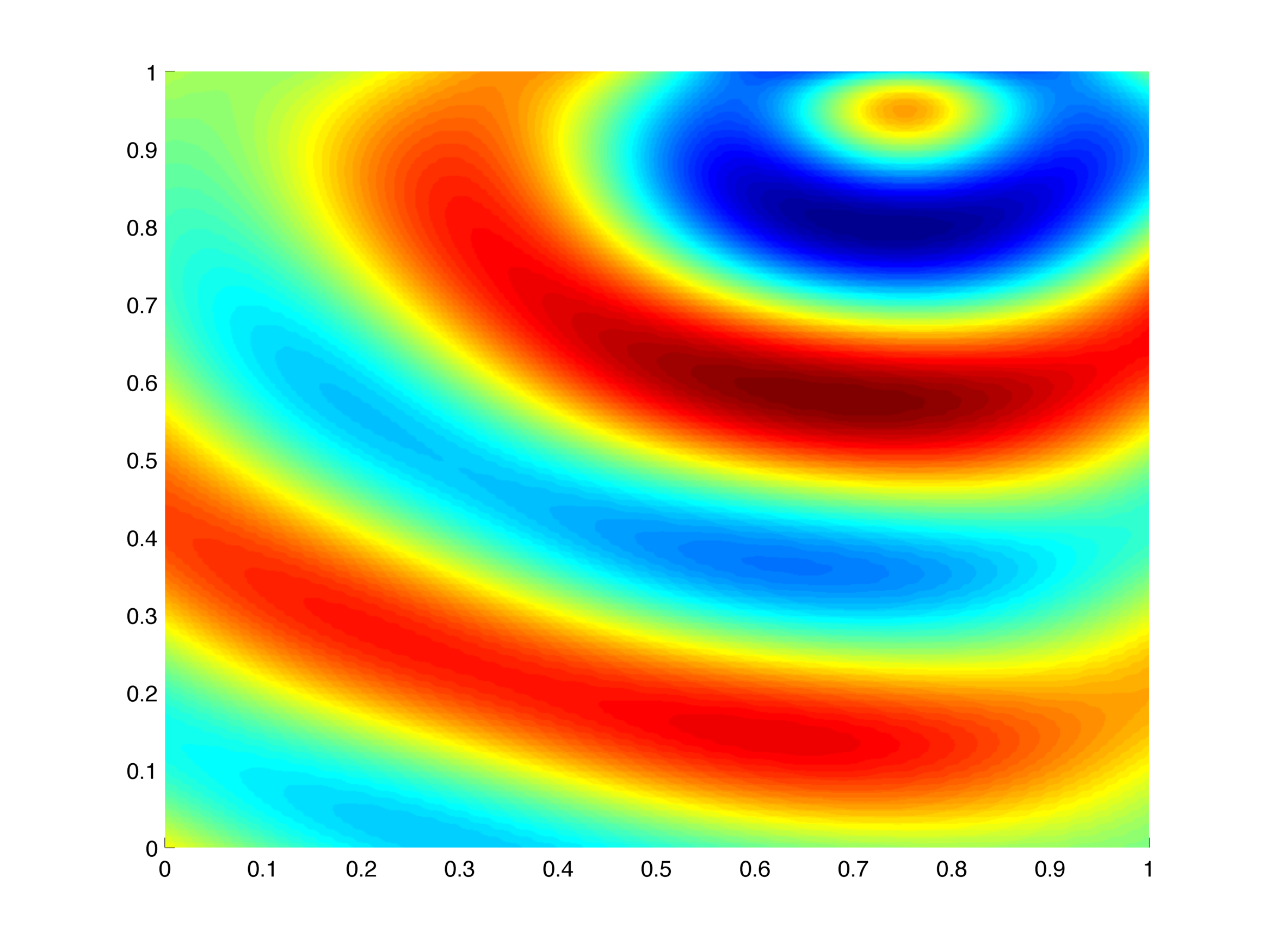}}
\caption{Solutions of the multiscale Helmholtz equation \refeq{helm} for wavelength $\omega = 4\pi$  and multiscale coefficient $a= \aeps_{A}$.}
\label{fig:helmdns}
\end{figure}

In the numerical simulations,  we set $\omega=4\pi$ and the Dirichlet data is obtained from solutions corresponding to multiple sources at $x_{s_{1}}=(0, .25)$, $x_{s_{2}}=(0, .75)$, $x_{s_{3}}=(.25, 1)$, and $x_{s_{4}}=(.75, 1)$. Results shown in Table \ref{table:helm} demonstrate microscale inversion of the Helmholtz equation \refeq{helm} using the methods described earlier. Here the parameters are chosen to be $\epsilon=1/120$, $H=1/40$ and $h=1/800$, $\delta=6\epsilon$.
   
\begin{table}[h]
\caption{Inversion error for parameter estimation in the Helmholtz model.}
\begin{center}
\begin{tabular}{|c|c|c|c|}
\hline
$N$ & \ref{AMP} - Amplitude & \ref{VF} - Volume Fraction & \ref{ANG} - Angle \\
\hline
1		&  0.02776429 &   0.03736824  &0.03891544 	 \\
2		&  0.04400273   &  0.02612802 	 &0.07489835\\
3		&  0.04607522  &  0.01553997	 &0.06326932\\
4		&   0.07915712  &   0.00976197	 & 0.25927552\\
5		&  0.05350197 &   	 0.01566102&0.17280984 \\
6		&  0.04874272 &   	0.01968874  & 0.21757778\\
\hline
\end{tabular}
\end{center}
\label{table:helm}
\end{table}%


\section{Conclusion}\label{sec:conclusion}

We present computational techniques for solving inverse problems for multiscale partial differential equations. Our goal is to recover microscale information using PDE constrained optimization. Instead of directly working with the effective equation we constrain the search space by representing the microscale by a limited number of parameters in order to have a well-posed inverse problem. When a parameter based effective model exists we use that,  otherwise,  the numerical heterogeneous multiscale method (HMM) can be used even when the explicit form of the effective equation is not known. By applying recovery results for inverse conductivity problems with special anisotropy \cite{Alessandrini2001}, we can prove that certain microstructure features can be determined uniquely from the Dirichlet to Neumann map corresponding to the effective equations. We provide numerical examples, which show quantitative convergence information with respect to numerical resolution, scale separation and parameterization strategies. We also provide numerical results that demonstrate the performance of these techniques applied to random media and simple models with lower order terms of the form used in medical imaging and exploration seismology.

The goal of the current research has been a proof of concept and there are natural future directions outside the scope of the current paper. For example, in more realistic applications where higher resolution is required, other minimization techniques must be used. Good candidates would be adjoint-state based methods, which are used in full waveform inversion \cite{Plessix2006}. Another direction is to further probe random cases and explore the use of multiple parameters in connection to known prior information in specific applications.

\section*{Acknowledgements} This work has benefited from valuable discussions with Assyr Abdulle, Kui Ren, Pingbing Ming and Fenyang Tang. This research was supported in part by NSF grant DMS-1217203, the Texas Consortium for Computational Seismology, and Institut Mittag-Leffler. CF was also supported in part by NSF grant DMS-1317015.

\bibliographystyle{SIAM}

\end{document}